\newtheorem{thm}{Theorem}
\newtheorem{prop}[thm]{Proposition}
\newtheorem{cor}[thm]{Corollary}
\newtheorem{rem}[thm]{Remark}
\newtheorem{lemma}[thm]{Lemma}
\newtheorem{prob}{Problem}
\newtheorem{quest}{Question}
\newcommand{\QED}{$\Box$}
\def\vertex(#1){\put(#1){\circle*{1.8}}}
\def\lab(#1)#2{\put(#1){\makebox(0,0)[c]{#2}}}
\let\oldenumerate\enumerate
\renewcommand{\enumerate}{
  \oldenumerate
  \setlength{\itemsep}{0pt}
  \setlength{\parskip}{0pt}
  \setlength{\parsep}{0pt}
}
\title{On the stress transit function}
\author{Arun Anil$^1$, Manoj Changat$^1$, Tanja Dravec$^{2,3}$, Jeny Jacob$^{1,6}$,\\ Lekshmi Kamal K.Sheela$^1$, Iztok Peterin$^{4,3}$, Polona Repolusk$^2$\thanks{corresponding author, polona.repolusk@um.si}, Rishi Ranjan Singh$^5$}
\date{February 2025}
\begin{document}

\maketitle

\noindent \small 1 Department of Futures Studies, University of Kerala, Thiruvananthapuram, India-695581.\\
\small 2 University of Maribor, Faculty of Natural Sciences and Mathematics, Koro\v{s}ka 160, 2000 Maribor, Slovenia \\
\small 3 Institute of Mathematics, Physics and Mechanics, Jadranska 19, 1000 Ljubljana, Slovenia\\
\small 4 University of Maribor, Faculty of Electrical Engineering and Computer Science, Koro\v{s}ka 46, 2000 Maribor, Slovenia\\
\small 5 Department of Computer Science \& Engineering, Indian Institute of Technology, Bhilai, Chhattisgarh, India - 491002\\
\small 6 MITS, Ernakulam, APJ Abdul Kalam Technological University, Kerala, India

\begin{abstract}
The stress interval $S(u,v)$ between $u,v\in V(G)$ is the set of all vertices in a graph $G$ that lie on every shortest $u,v$-path. A set $U \subseteq V(G)$ is stress convex if $S(u,v) \subseteq U$ for any $u,v\in U$. A vertex $v \in V(G)$ is s-extreme if $V(G)-v$ is a stress convex set in $G$. The stress number $sn(G)$ of $G$ is the minimum cardinality of a set $U$ where $\bigcup_{u,v \in U}S(u,v)=V(G)$. The stress hull number $sh(G)$ of $G$ is the minimum cardinality of a set whose stress convex hull is $V(G)$. In this paper, we present many basic properties of stress intervals. We characterize s-extreme vertices of a graph $G$ and construct graphs $G$ with arbitrarily large difference between the number of s-extreme vertices, $sh(G)$ and $sn(G)$. Then we study these three invariants for some special graph families, such as graph products, split graphs, and block graphs. We show that in any split graph $G$, $sh(G)=sn(G)=|Ext_s(G)|$, where $Ext_s(G)$ is the set of s-extreme vertices of $G$. Finally, we show that for $k \in \mathbb{N}$, deciding whether $sn(G) \leq k$ is NP-complete problem, even when restricted to bipartite graphs. 

\bigskip\noindent \textbf{Keywords:} transit function, stress interval, stress convexity, stress number, stress hull number\\

\end{abstract}

\section{Introduction and preliminaries}

A \emph{transit function} on a nonempty finite set $V$ is a function $R : V \times V \rightarrow 2^{V}$  satisfying the three transit axioms for every $x,y \in V$: (t1)  $x \in R(x,y)$; (t2) $R(x,y)=R(y,x)$; and (t3) $R(x,x)=\{x\}$. 
Prime examples of transit functions in graphs are the interval function $I(u,v)$ defined on $I: V(G)\times V(G) \longrightarrow 2^{V(G)}$ by
\begin{equation}\label{inter}
I(u,v)  =\{w\in V: w \text{ is on a shortest }u,v\text{-path} \},
\end{equation} 
induced path function $J(u,v)$ where we replace `shortest' by `induced' in (\ref{inter}) and all-paths function $A(u,v)$ where we just delete `shortest' in (\ref{inter}). The exhaustive monograph on the interval function was conducted by Mulder~\cite{mu-80}, whereas the function $J(u,v)$ was studied initially by Duchet \cite{duchet-87} and Morgana and Mulder \cite{morgana-02}, and $A(u,v)$ by  Sampathkumar \cite{sampath-84} and Changat et al. \cite{ckm-01}, all followed by many authors.  However the first systematic study of $I(u,v)$ from the transit functions perspective was presented later, also by Mulder \cite{Mulder-2008}. The cut vertex transit function \cite{chnest-2021,shchst-2025} (also on hypergraphs) and the longest path transit function \cite{chgnpe-2011} are other examples of interesting transit functions in graphs. Recently, a new transit function, named the toll function, and its axiomatic characterizations on AT-free graphs and interval graphs, was investigated in \cite{toll,KCP-2023}.

Convexity is a widely explored mathematical concept that incorporates various types of graph intervals and, consequently, transit functions for prime examples. 
%Also here is the definition is more general. 
A family $\mathcal{C}$ of subsets of a finite set $X$ is a \emph{convexity} on $X$ if $\emptyset ,X \in {\mathcal{C}}$ and $\mathcal{C}$ is closed under intersections~\cite{vel-93}. (If $X$ is not finite, then also nested unions must belong to $\mathcal{C}$.) For a transit function $R$, \emph{convex} (or $R$-\emph{convex}) sets are defined as sets $S$ for which $R(x,y) \subseteq S$ for any $x,y\in S$. In graph theory, the most investigated intervals are geodesic intervals that are built from $I(u,v)$ and monophonic intervals arising from $J(u,v)$~\cite{chmusi-05}.

Betweenness centrality is a widely used measure to identify crucial vertices in graphs representing complex systems with flow~\cite{rrs:2022}. It estimates the load or control of a vertex in a graph by computing the sum of the fraction of shortest paths passing through a given vertex~\cite{brandes}. Let $s_{uv}$ be the number of distinct shortest paths between a vertex $u$ and a vertex $v$. Let $s_{uv}(i)$ be the number of distinct shortest paths between $u$ and $v$ that are passing through vertex $i$. The \emph{betweenness centrality} of a vertex $i$ is calculated as $\sum_{|\{u,i,v\}|=3} \frac{s_{uv}(i)}{s_{uv}}$, as defined by Freeman~\cite{freeman}. A similar yet simplified measure is the \emph{stress centrality}~\cite{shim-53}, which assumes that the importance of a vertex is proportional to the number of shortest paths passing through that vertex.
Let $u,v, x\in V$. The \emph{stress of the vertex} $x$ \index{stress of a vertex} depending on a pair of vertices $u,v$, is denoted as $s_{uv}(x)$. %, is defined as the number of distinct shortest paths from $u$ to $v$ which passes through $x$. 
%The term  ``stress" is adapted from Shimbel \cite{shim-53} in 1953 on the network centrality measure of an undirected network, which he called as stress centrality. 
This score estimates the potential of a vertex, say $x$, to control the flow between a pair of vertices in a network. % and is based on the number of shortest paths between a pair of vertices passing through $x$.  %Vertices often on these shortest paths will have a higher stress centrality.

Clearly, $s_{uv}(u)= s_{uv}(v)$ which is exactly the number of all distinct shortest paths between $u$ and $v$. Further, $s_{uv}(x)=s_{vu}(x)$ for every $x\in V$. Observe also that, $0<s_{uv}(x) \leq s_{uv}(u)$ for any $x \in I(u,v)$ and $s_{uv}(x)=0$ for any $x \notin I(u,v)$. Motivated by these remarks, it is evident that the vertices with $s_{uv}(x)=s_{uv}(u)$ are critical for every shortest path between vertex $u$ and $v$. For such vertices, $\frac{s_{uv}(x)}{s_{uv}}=1$, which means that the contribution of shortest paths between vertices $u$ and $v$ in the betweenness score of vertex $x$ is 1. If communication is assumed to be occurring through the shortest paths in a communication system, vertices with $s_{uv}(x)=s_{uv}(u)$ are essential for any communication between vertex $u$ and $v$. 
We use this concept to define the stress transit function of a graph $G=(V,E)$.

The \emph{stress interval} \index{stress function} $S: V\times V\to 2^V$ is  
      $$S(u,v) = \{ x\in I(u,v): s_{uv}(x)=s_{uv}(u)\}. $$    
Hence $x \in S(u,v)$ if and only if every shortest path between $u$ and $v$ passes through $x$. From the definition of $S(u,v)$, we can see that $S(u,v) \subseteq I(u,v)$. Moreover, the stress interval represents also a transit function for a connected graph as (t1), (t2) and (t3) are clearly satisfied. However, if $G$ is not connected and $u$ and $v$ belong to different components, then $S(u,v)=\emptyset$ because $I(u,v)=\emptyset$ and (t1) does not hold.  
      
As defined above, the stress interval $S_{G}(u,v)$ between vertices $u$ and $v$ is the set of all vertices that lie on all shortest paths between $u$ and $v$ in $G$. A subset $U$ of $V(G)$ is {\it{stress convex}} (or {\it s-convex}) if $S_{G}(u,v)\subseteq U$ for all $u,v\in U$. If $U$ is s-convex, the subgraph of $G$ induced by $U$ will be called s-convex subgraph of $G$. The \emph{stress closure} of a set $U$ of vertices of a graph $G$, $S_G \left[ U \right]$, is the union of stress intervals between all pairs of vertices from $U$, that is, $S_G \left[ U \right] = \bigcup_{u,v \in U}S_G(u,v)$. A set $U$ of vertices of $G$ is a {\it stress set} in $G$ if $S_G \left[ U \right] = V (G)$. In all these definitions subscript $G$ can be omitted if the graph $G$ is clear from the context. The cardinality of a minimum stress set in a graph $G$ is called the {\it stress number} of $G$ and denoted $sn(G)$. Given a subset $U \subseteq V(G)$, the {\it s-convex hull} $\left[ U \right]$ of $U$ is the smallest s-convex set that contains $U$. We say that $U$ is a {\it stress hull set} of $G$ if $\left[ U \right]=V(G)$. The cardinality of a minimum stress hull set of $G$ is the {\it stress hull number} of $G$, denoted $sh(G)$. Vertex $v$ of a graph $G$ is \emph{stress extreme} (\emph{s-extreme} for short) if $V(G-\{v\})$ is s-convex. The set of extreme vertices of $G$ is denoted by $Ext_s(G)$. Moreover, for an s-convex set $K$ in $G$, a vertex $v$ is s-extreme in $K$ if $K-\{v\}$ is an s-convex set of $G$. Notice that all the definitions from this paragraph can be adjusted from stress function to any other transit function like interval, induced, all-paths, cut vertices, longest paths, toll walk and others. In particular, the geodetic number of a graph $G$ is defined analogue to the stress number, only that we use intervals $I(u,v)$ instead of stress intervals $S(u,v)$.

Let $G$ be a connected graph. As usual $N(v)$ denotes the \emph{open interval} $\{u\in V(G):uv\in E(G)\}$ of $v\in V(G)$. Let $X \subseteq V(G)$ be any subset of $V(G)$. Then the {\it induced subgraph} $G[X]$ is the graph whose vertex set is $X$ and whose edge set consists of all of the edges in $E(G)$ that have both endpoints in $X$. That is, for any two vertices $ u,v\in X$, $u$ and $v$ are adjacent in $G [X]$ if and only if they are adjacent in $G$. 
A vertex $v\in V(G)$ is a {\it cut vertex}, if $G-\{ v\}$ has more components than $G$. Let $x,y$ be two vertices of the same component of a graph $G$. A set $C \subseteq V(G)$ is a {\it vertex cut} separating $x$ and $y$ if $x$ and $y$ are in different components of $G-C$. A {\it clique} of a graph $G$ is an induced subgraph of $G$ isomorphic to a complete graph. We call a clique also a set $S \subseteq V(G)$ any two verices of which are adjacent in $G$. A vertex $v \in V(G)$ is \emph{simplicial} if $N(v)$ is a clique. The {\it distance} $d_G(u,v)$ between vertices $u,v\in V(G)$ is the length of a shortest $u,v$-path in $G.$ The  {\it diameter} of a graph, $diam(G)$, is defined as $ diam(G)=\max_{u,v\in V(G)}\lbrace d(u,v)\rbrace$. A one vertex graph $G\cong K_1$ is called {\it trivial}. Let $P_1,P_2$ be two $u,v$-paths in $G$. We say that these paths are {\it internally disjoint} if  $V(P_1)\cap V(P_2)=\{u,v\}$ and $u\xrightarrow{P}  x$ denotes the subpath of a path $P$ with end vertices $u$ and $x$.

The concepts of interval numbers and interval sets have been extensively studied for various transit functions in graphs.
 For example, the geodetic number, associated with the interval function $I$, has been studied in \cite{buckley-88, chartrand-2000, chartrand-2002, harary-93}. These studies include investigations in specific classes of graphs, such as median graphs \cite{bresar-2008_2}, extreme geodetic graphs \cite{cz-2002}, and the Cartesian product of graphs \cite{bresar-2008}. Dourado et al. \cite{Dourado:2010} proved that determining whether a geodetic set of size at most $k$ exists in chordal or chordal bipartite graphs is NP-complete. Similarly, the monophonic number, defined with respect to the monophonic (induced path) function, has been studied in \cite{Paluga-07, Santhakumaran-14}. Dourado et al. \cite{Dourado:2010_2} further established that decision problems related to monophonic convexity and monophonic numbers are NP-complete. Further, the toll number for the toll function and the $\Delta$-interval number for $\Delta$-convexity have been investigated in \cite{dravec-2022} and \cite{bijo-2022}, respectively.

In the next section, we describe several basic properties of stress intervals. In particularly, we fully describe stress intervals in terms of cut vertices of some special subgraphs and characterize extreme vertices of s-convex subsets. We continue with the section about stress number and stress hull number, where we show differences and similarities of both numbers. This is underlined by computing exact values of the stress number and the stress hull number of several graph families. In Section~\ref{sec4}, we show that stress interval can be determined in polynomial time and that the decision version of the stress set problem is NP-complete. We conclude the paper with several interesting questions and problems.

\section{Basic properties of stress intervals}

In this section, we present some properties of stress intervals and s-convex sets. Moreover, we characterize graphs $G$ for which $S(u,v)=I(u,v)$ holds for any $u,v \in V(G)$. We start with the following remark that is based on the distances from $u$ to different vertices of $S(u,v)$.

\begin{rem}\label{st}
There exists a unique ordering of the elements in $S(u,v)$ ($\{u,x_1,x_2,$ $ \ldots, x_n,v\}$) such that each shortest $u,v$-path visits the vertices in the same order. We denote the shortest $u,v$-path through the vertices of $S(u,v)$ as $u-x_1 - x_2 - \cdots -x_n - v$.
\end{rem}

It is known that the geodesic interval $I(u,v)$ may not always be geodesically convex for any given pair of vertices $u$ and $v$ in a graph. For example, consider the complete bipartite graph $K_{2,3}$, let $u,v,z$ be the degree two vertices, and $x,y$ be the degree three vertices, then $x,y\in I(u,v)$, $z\in I(x,y)$, but $z\notin I(u,v)$, which shows that $I(u,v)$ is not geodesically convex. Graphs in which $I(u,v)$ are always geodesically convex are known as interval monotone graphs, see \cite{mu-80}. A characterization of interval monotone graphs still remains an unsolved problem. On the other hand, we can prove that the stress intervals $S(u,v)$ are always s-convex. 

\begin{prop}
The stress interval $S(u,v)$ is s-convex for any graph $G$.
\end{prop}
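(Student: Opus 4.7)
The plan is to unpack the definition of s-convexity and reduce everything to the standard subpath property of shortest paths. Fix $u,v\in V(G)$ and take two arbitrary vertices $a,b\in S(u,v)$. By Remark~\ref{st}, all vertices of $S(u,v)$ occur in a fixed order on every shortest $u,v$-path, so without loss of generality $a$ precedes $b$ on every such path. I need to show that $S(a,b)\subseteq S(u,v)$; pick any $z\in S(a,b)$.

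To verify $z\in S(u,v)$, I would argue that $z$ lies on every shortest $u,v$-path. Let $P$ be an arbitrary shortest $u,v$-path. Because $a,b\in S(u,v)$, both $a$ and $b$ lie on $P$, with $a$ appearing before $b$. Let $P_{ab}$ denote the $a,b$-subpath of $P$. The standard fact that subpaths of shortest paths are shortest paths gives that $P_{ab}$ is a shortest $a,b$-path in $G$ (otherwise replacing $P_{ab}$ by a strictly shorter $a,b$-path would produce a $u,v$-walk shorter than $P$, contradicting that $P$ is a shortest $u,v$-path).

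Since $z\in S(a,b)$, the vertex $z$ lies on every shortest $a,b$-path, and in particular $z$ lies on $P_{ab}$, hence on $P$. Because $P$ was an arbitrary shortest $u,v$-path, this shows $z\in S(u,v)$. Thus $S(a,b)\subseteq S(u,v)$ for every $a,b\in S(u,v)$, so $S(u,v)$ is s-convex.

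There is no real obstacle here: the only ingredients are the ordering statement from Remark~\ref{st} (used to make sense of ``the $a,b$-subpath of $P$'') and the subpath-of-a-shortest-path principle. The argument is essentially a one-line consequence of these two facts, so the write-up should be short and direct.
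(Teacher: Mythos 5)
Your argument is correct and is essentially identical to the paper's proof: both use the ordering from Remark~\ref{st} to place $a$ before $b$ on every shortest $u,v$-path and then invoke the fact that the $a,b$-subpath of a shortest $u,v$-path is itself a shortest $a,b$-path, forcing $z\in S(a,b)$ onto every shortest $u,v$-path. You merely spell out the subpath-of-a-shortest-path justification a bit more explicitly than the paper does.
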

\begin{proof} 
Let $x,y$ be arbitrary vertices of $S(u,v)$. By Remark~\ref{st}, we may without loss of generality assume that $x$ is before $y$ in any shortest $u,v$-path $P$, i.e.\  $P:u-x-y-v$. Let $z \in S(x,y)$. Hence $z$ lies on all shortest $x,y$-paths and since any shortest $u,v$-path contains shortest $x,y$-path as a subpath, it follows that $z$ is in any shortest $u,v$-path. Therefore $z \in S(u,v)$ and thus $S(x,y) \subseteq S(u,v)$.

%Assume that $u$ and $v$ belong to different components of $G$. Then $S(u,v)=\emptyset$ which is s-convex by the definition. Therefore, let  $u$ and $v$ belong to the same component of $G$. We claim that if $x \in S(u, v)$, then $S(u,x)\subseteq S(u,v)$. \\
%Let $y \in S(u, x)$. Since $x \in I(u, v)$, $y \in  I(u, x)$, clearly, $y \in I(u,v)$. Now, it is enough to show that $s_{uv}(y) = s_{uv}(u)$. Let $P$ be any shortest $u,v$-path. Clearly, $P$ contains $x$, since $x \in S(u,v)$. Let $P'$ be the shortest $u,x$-subpath of $P$. Since $y\in S(u,x)$,  $P'$ contains $y$ and hence  $P$ contains $y$. That is, any shortest $u,v$-path in $G$ will pass through $y$. Hence, $s_{uv}(y) = s_{uv}(u)$. Thus, for any $x \in S(u, v)$, $S(u,x)\subseteq S(u,v)$ and our claim follows.

%To prove the proposition, let $x,y \in S(u,v)$ and $z\in S(x,y)$. By Remark ~\ref{st}, either $u-x-y-v$ or $u-y-x-v$ will be a shortest path from $u$ to $v$; say the first.  Clearly $y \in I(x,v)$. If $s_{xv}(y) < s_{xv}(x)$, then there exists an $x,v$-shortest path $Q$ which does not pass through $y$. Any shortest $u,x$-path together with $Q$ will be a $u,v$-shortest path not passing through $y$. Therefore, $y\notin S(u,v)$, a contradiction. Thus, $s_{xv}(y) = s_{xv}(x)$ and $y \in S(x,v)$ follows. Since, $z\in S(x,y)$, by Claim, $z \in S(x,v)$. Since $x\in S(u,v)$ and $z \in S(x,v)$, again by our claim, we get $z \in S(u,v)$. \hfill\QED

\end{proof}

In the following theorem, we characterize stress intervals of cardinality 2. For this, we need the well-known Menger's theorem.

\begin{thm}\label{thm:Menger}\cite{Menger-1927}
Let $G$ be a connected graph and $x,y$ arbitrary non-adjacent vertices of $G$. Then the maximum number of internally disjoint paths between $x$ and $y$ equals to the minimum number of vertices in a vertex cut separating $x$ and $y$.
\end{thm}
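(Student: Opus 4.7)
The plan is to prove the two inequalities separately; the easy direction comes from a counting argument, while the hard direction requires induction on $|E(G)|$.

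For the easy direction, suppose $P_1, P_2, \ldots, P_m$ are pairwise internally disjoint $x,y$-paths and $C$ is any $x,y$-vertex-cut. Since removing $C$ separates $x$ from $y$, every $P_i$ must contain at least one vertex of $C$, and this vertex must be internal because $x, y \notin C$. As the paths share no internal vertices, these vertices of $C$ are distinct, so $m \leq |C|$. Taking $C$ of minimum cardinality yields that the maximum number of internally disjoint $x,y$-paths is at most the minimum $x,y$-cut.

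For the reverse direction, let $k$ denote the minimum $x,y$-cut size and induct on $|E(G)|$; the aim is to construct $k$ internally disjoint $x,y$-paths. Fix a minimum cut $S$ of size $k$, let $V_x$ be the component of $G - S$ containing $x$, and $V_y$ the union of the other components. Form two auxiliary graphs: the ``$x$-side'' graph $G_x$ obtained from $G[V_x \cup S]$ by adjoining a new vertex $y'$ joined to every vertex of $S$, and the symmetric $G_y$ with a new vertex $x'$. Provided $S \neq N(x)$ and $S \neq N(y)$, both $G_x$ and $G_y$ have strictly fewer edges than $G$, and using the minimality of $S$ one verifies that the minimum $x,y'$-cut in $G_x$ and the minimum $x',y$-cut in $G_y$ are again equal to $k$. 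By the induction hypothesis, each auxiliary graph admits $k$ internally disjoint paths, and these paths, each arriving at $y'$ (respectively leaving $x'$) through a distinct vertex of $S$, can be concatenated at $S$ to produce $k$ internally disjoint $x,y$-paths in $G$.

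The main obstacle is the degenerate situation in which every minimum cut coincides with $N(x)$ or $N(y)$, because then the reduction above collapses (one side of the split is trivial). To handle it, I would take a shortest $x,y$-path $x, v_1, \ldots, v_{t-1}, y$ (of length at least two, since $x, y$ are non-adjacent) and examine an internal edge $e = v_i v_{i+1}$. If the minimum $x,y$-cut in $G - e$ is still $k$, induction on $G - e$ finishes the argument. Otherwise, any cut of size $k-1$ in $G - e$, augmented with a suitable endpoint of $e$, yields a minimum cut of $G$ which, by the geometry of the shortest path $P$, contains the vertex $v_i$ or $v_{i+1}$ and therefore cannot equal $N(x)$ or $N(y)$; this contradicts the standing case hypothesis and closes the induction.
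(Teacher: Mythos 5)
The paper states this as a classical result and cites Menger's 1927 article without proof, so there is no in-paper argument to compare against; your proposal is the standard textbook induction on $|E(G)|$. The easy inequality and your Case 1 (a minimum cut $S$ with $S \neq N(x)$ and $S \neq N(y)$, split into the two auxiliary graphs $G_x$ and $G_y$, and concatenate the paths at $S$) are correct, though you should justify that $G_x$ and $G_y$ really have strictly fewer edges: by minimality of $S$, $N(x) \subseteq S$ would force $N(x)=S$, so in this case $x$ has a neighbor inside $V_x$ (and symmetrically for $y$), and counting the at least $k$ edges between $S$ and each side plus this extra edge gives the strict drop.

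The genuine gap is in the residual case where every minimum cut equals $N(x)$ or $N(y)$. Your reduction deletes an \emph{internal} edge $v_iv_{i+1}$ of a shortest $x,y$-path, but when $d(x,y)=2$ the path $x,v_1,y$ has no internal edge: both of its edges are incident with $x$ or $y$, and if you instead delete, say, $xv_1$, the augmented cut $T\cup\{v_1\}$ contains $v_1\in N(x)\cap N(y)$ and so may perfectly well equal $N(x)$ or $N(y)$ --- no contradiction arises. This sub-case needs a different reduction: pick a common neighbor $v\in N(x)\cap N(y)$; every $x,y$-cut contains $v$, the minimum $x,y$-cut of $G-\{v\}$ equals $k-1$, and the induction hypothesis applied to $G-\{v\}$ together with the path $x,v,y$ yields the $k$ internally disjoint paths. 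A second, smaller inaccuracy occurs when $d(x,y)=3$ and $e=v_1v_2$: there $T\cup\{v_1\}$ can equal $N(x)$ and $T\cup\{v_2\}$ can equal $N(y)$, so the contradiction you assert does not follow directly; instead these two equalities force $T\subseteq N(x)\cap N(y)=\emptyset$, hence $k=1$, which is settled by connectivity alone. With these repairs (and the routine observation that the auxiliary vertices $y'$, $x'$ remain non-adjacent to $x$, $y$ so the induction hypothesis applies), the argument is the standard one and is complete.
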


\begin{thm}\label{p:S eq u,v}
    Let $G$ be a connected graph and let $u$ and $v$ be different vertices of $G$. Then $S(u,v) = \{u,v\}$ if and only if either $uv \in E(G)$ or there are at least two internally disjoint shortest $u,v$-paths.
\end{thm}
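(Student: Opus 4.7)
My plan is to prove the two directions separately. The backward direction $(\Leftarrow)$ is immediate: if $uv \in E(G)$, the edge $uv$ is the unique shortest $u,v$-path, so $S(u,v) = \{u,v\}$; and if there exist two internally disjoint shortest $u,v$-paths $P_1, P_2$, then any internal vertex of $P_i$ is absent from $P_{3-i}$ and therefore cannot lie on every shortest $u,v$-path, forcing $S(u,v) = \{u,v\}$.

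For the forward direction, I would assume $S(u,v) = \{u,v\}$ and $uv \notin E(G)$, so $k = d(u,v) \geq 2$, and partition $I(u,v)$ into the BFS layers $L_i = \{w \in I(u,v) : d(u,w) = i\}$ for $0 \leq i \leq k$. Since every shortest $u,v$-path visits exactly one vertex in each $L_i$, a vertex $w \in L_i$ with $1 \leq i \leq k-1$ lies on every shortest $u,v$-path if and only if $L_i = \{w\}$: the ``if'' direction is immediate, while the converse follows because any other $w' \in L_i$ would lie on some shortest $u,v$-path which then meets $L_i$ in $w'$ and hence avoids $w$. The hypothesis $S(u,v) = \{u,v\}$ therefore forces $|L_i| \geq 2$ for every $1 \leq i \leq k-1$.

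To produce two internally disjoint shortest paths, I would work in the shortest-path DAG $D$ on vertex set $I(u,v)$ with arcs $x \to y$ whenever $xy \in E(G)$ and $d(u,y) = d(u,x)+1$. Directed $u,v$-paths in $D$ are exactly the shortest $u,v$-paths of $G$, and a single internal vertex $w$ would be a size-one $u,v$-separator in $D$ precisely when $w$ lies on every shortest $u,v$-path, i.e.\ when $w \in S(u,v) \setminus \{u,v\}$; since no such $w$ exists and $uv \notin E(D)$, the minimum $u,v$-separator in $D$ has size at least two. The directed (DAG) form of Menger's theorem then delivers two internally vertex-disjoint directed $u,v$-paths in $D$, i.e.\ two internally disjoint shortest $u,v$-paths in $G$, which finishes this direction.

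The step I expect to be the main obstacle is this last invocation of Menger, since the excerpt only quotes the undirected version (Theorem~\ref{thm:Menger}). I would clear it either by citing the classical DAG strengthening of Menger's theorem, or by a short self-contained bridge: let $H$ be the subgraph of $G$ formed by all edges on some shortest $u,v$-path; every internal vertex of $H$ is avoided by some shortest $u,v$-path, so no single vertex separates $u$ from $v$ in $H$, and Theorem~\ref{thm:Menger} applied to $H$ yields two internally disjoint $u,v$-paths there. These paths may zigzag between consecutive BFS layers, but using $|L_i| \geq 2$ for every internal $i$ I can iteratively shorten each backtracking subpath to a direct forward step, locally rerouting around the other path when the two paths would otherwise collide, until both paths have length $k$ while remaining internally vertex-disjoint.
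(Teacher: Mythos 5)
Your proof is correct, and in the forward direction it takes a genuinely different (and in fact more careful) route than the paper. The paper's proof of that direction negates the conclusion as ``there do not exist two internally disjoint $u,v$-paths,'' applies the undirected Menger theorem (Theorem~\ref{thm:Menger}) to $G$ to extract a one-vertex cut $\{x\}$, and derives $x \in S(u,v)$; read literally, this only establishes the existence of two internally disjoint $u,v$-\emph{paths}, not two internally disjoint \emph{shortest} $u,v$-paths as the statement asserts. Your argument closes exactly this gap: by passing to the shortest-path DAG $D$, where directed $u,v$-paths are precisely the shortest $u,v$-paths of $G$ and where $S(u,v)=\{u,v\}$ says that no single internal vertex separates $u$ from $v$, the directed vertex form of Menger's theorem produces two internally disjoint \emph{shortest} paths directly. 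What the paper's approach buys is brevity and the ability to cite only the undirected Menger theorem already stated in the text; what yours buys is that the object Menger hands you is already a shortest path, so no conversion step is needed. Two small points: in the backward direction you should also note that a vertex lying on neither $P_1$ nor $P_2$ is excluded from $S(u,v)$ simply because it misses the shortest path $P_1$ (your phrasing only rules out internal vertices of the two paths); and your fallback ``bridge'' via undirected Menger on the shortest-path subgraph followed by iterative rerouting of backtracking subpaths is the one genuinely shaky part of the write-up --- the rerouting is not obviously terminating or collision-free and would need a real argument --- so I would drop it and keep only the directed-Menger (or, equivalently, a layered max-flow) formulation, which is standard and self-contained. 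The layer analysis showing $|L_i|\geq 2$ is correct but is not needed once you invoke the directed Menger theorem.
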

\begin{proof}
If $uv \in E(G)$, then $u,v$ is the only shortest $u,v$-path and hence $S(u,v)=\{u,v\}$. Suppose now that there exists two internally disjoint $u,v$-paths $P$ and $Q$. Hence no vertex of $V(G)- \{u,v\}$ belongs to every shortest $u,v$-path. Indeed, no vertex of $V(G)- \{u,v\}$ is on both $P$ and $Q$ as they are internally disjoint. Hence $S(u,v)=\{u,v\}$.

For the converse assume that $S(u,v)=\{u,v\}$ and suppose that $uv \notin E(G)$. For the purpose of a contradiction, assume that there do not exist two internally disjoint $u,v$-paths. Thus, by Menger's theorem, there exists a vertex cut separating $u$ and $v$ of cardinality 1. Let $\{x\}$ be a cut set that separates $u$ and $v$. Hence, every (shortest) $u,v$-path in $G$ contains $x$ and therefore $x \in S(u,v)$, a contradiction. \hfill\QED
\end{proof}

There are several ways to describe stress intervals. Here, we present one characterization, while another will be discussed in Section \ref{sec4}, where we will also show that the stress interval between two vertices can be computed in polynomial time. First, we present a simple lemma.

\begin{lemma}\label{l:cutVertices}
Let $G=(V,E)$ be a graph, $u,v \in V(G)$ and let $H= G[I(u,v)]$. Then for any $x \in S(u,v)-\{u,v\}$ it holds that  $H-\{x\}$ is not connected.
\end{lemma}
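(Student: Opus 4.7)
The plan is to partition the vertex set of $H-\{x\}$ into two non-empty parts according to distance from $u$, and then verify that no edge of $H$ joins them.

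First, I would observe that every $w \in I(u,v)$ lies on some shortest $u,v$-path, and since $x \in S(u,v)$, this path must pass through $x$. On such a path, the unique vertex at distance $d(u,x)$ from $u$ is $x$ itself; since $w$ sits at its own position on the path, we must have $d(u,w) \neq d(u,x)$ whenever $w \neq x$. Therefore I can define
\[
A = \{w \in I(u,v) : d(u,w) < d(u,x)\}, \qquad B = \{w \in I(u,v) : d(u,w) > d(u,x)\},
\]
and these disjoint sets partition $I(u,v)\setminus\{x\}$. Both are non-empty: $u \in A$ and $v \in B$, using $x \notin \{u,v\}$.

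Next, I would suppose for contradiction that $H-\{x\}$ is connected. Then there must be an edge $ab \in E(G)$ with $a \in A$ and $b \in B$. Since $a,b \in I(u,v)$, the triangle inequality together with $|d(u,a)-d(u,b)| \leq 1$ (as $ab$ is an edge) gives $d(u,b) \leq d(u,a)+1$. However, by the definitions of $A$ and $B$ and because all the quantities involved are integers, $d(u,a) \leq d(u,x)-1$ and $d(u,b) \geq d(u,x)+1$, so $d(u,b)-d(u,a) \geq 2$, a contradiction.

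The main conceptual step is recognizing that distance from $u$ provides a clean separator once $x$ is known to lie on every shortest $u,v$-path; the remaining technical point is only the integer gap argument ruling out a crossing edge. No serious obstacles are anticipated. Alternatively, one could appeal directly to Menger's theorem as in Theorem~\ref{p:S eq u,v}, arguing that if $H-\{x\}$ were connected then $H$ would contain two internally disjoint shortest $u,v$-paths avoiding $x$ at some point, contradicting $x \in S(u,v)$; however, the distance-layer argument above is more self-contained.
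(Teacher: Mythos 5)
Your proof is correct, and it takes a different (and in fact more careful) route than the paper's. The paper argues by a one-line contradiction: if $H-\{x\}$ were connected, there would be a shortest $u,v$-path avoiding $x$ — but it does not explain why a mere $u,v$-path inside $H-\{x\}$ must be a \emph{shortest} $u,v$-path in $G$, so as written that step is a genuine leap. You instead exhibit an explicit certificate of disconnection: the distance layers $A=\{w: d(u,w)<d(u,x)\}$ and $B=\{w: d(u,w)>d(u,x)\}$ partition $V(H)-\{x\}$ (your preliminary observation that no $w\in I(u,v)-\{x\}$ can satisfy $d(u,w)=d(u,x)$, because $w$ and $x$ occupy distinct positions on a common shortest $u,v$-path, is exactly the point that makes this a partition), both parts are non-empty ($u\in A$, $v\in B$), and the integer-gap computation $d(u,b)-d(u,a)\geq 2$ versus $|d(u,a)-d(u,b)|\leq 1$ rules out any crossing edge. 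This buys you a self-contained argument that also supplies the justification the paper's proof leaves implicit; the paper's version buys brevity at the cost of rigor. Your parenthetical Menger alternative would work too but is unnecessary machinery here.
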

\begin{proof}
If $x \in S(u,v)-\{u,v\}$, then $x$ lies on every shortest $u,v$-path. If $H-\{x\}$ is connected, then there is a shortest $u,v$-path $P$ and $x\notin P$, a contradiction.\hfill\QED
\end{proof}

\begin{prop}\label{s-interval}
Let $G$ be a connected graph and $u,v\in V(G)$. Then 
$$S(u,v)=\{x \in I(u,v)~|~  x \textrm{ is a cut vertex of } G[I(u,v)]\} \cup \lbrace u,v \rbrace.$$
\end{prop}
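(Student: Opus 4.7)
The plan is to prove both inclusions of the claimed set equality. The trivial containment of $\{u,v\}$ in $S(u,v)$ follows from transit axiom (t1), so the real work concerns vertices $x \in I(u,v) \setminus \{u,v\}$.

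For the forward inclusion, suppose $x \in S(u,v) \setminus \{u,v\}$. By Lemma~\ref{l:cutVertices}, the subgraph $H - \{x\}$ is disconnected, where $H = G[I(u,v)]$. To conclude that $x$ is a cut vertex of $H$, I just need that $H$ itself is connected, which is immediate: every vertex of $I(u,v)$ lies on some shortest $u,v$-path (each such path is contained in $H$), so every vertex is connected to both $u$ and $v$ inside $H$.

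For the reverse inclusion, suppose $x \in I(u,v) \setminus \{u,v\}$ is a cut vertex of $H$. I need to show that every shortest $u,v$-path passes through $x$. Since every shortest $u,v$-path lies inside $H$, it suffices to prove that $u$ and $v$ lie in different components of $H - \{x\}$. Assume for contradiction that $u$ and $v$ lie in a common component $C$ of $H - \{x\}$. Because $x$ is a cut vertex, there exists another component $C'$ of $H - \{x\}$; pick any $y \in C'$. Since $y \in I(u,v)$, there is a shortest $u,v$-path $P$ through $y$, and $P \subseteq H$. Traveling along $P$ from $u \in C$ to $y \in C'$ forces $P$ to use $x$ at least once, and continuing from $y \in C'$ to $v \in C$ forces $P$ to use $x$ again. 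This contradicts $P$ being a path. Hence $u$ and $v$ are separated by $x$ in $H$, so every shortest $u,v$-path must pass through $x$, giving $x \in S(u,v)$.

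The main obstacle is the case analysis in the reverse direction: one has to rule out the possibility that removing the cut vertex $x$ leaves $u$ and $v$ in the same component while disconnecting some ``side branch.'' The key observation making this work is that every vertex of $H$ already lies on at least one shortest $u,v$-path inside $H$, which is exactly what forces the second traversal of $x$ and produces the contradiction. Everything else is routine; the connectedness of $H$ and the forward direction are immediate given Lemma~\ref{l:cutVertices}.
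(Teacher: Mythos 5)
Your proof is correct, and it is in fact more complete than the one in the paper on the only nontrivial point. The paper argues the inclusion $S(u,v)-\{u,v\}\subseteq A$ exactly as you do, via Lemma~\ref{l:cutVertices} (you additionally make explicit that $H=G[I(u,v)]$ is connected, which is needed to pass from ``$H-\{x\}$ disconnected'' to ``$x$ is a cut vertex of $H$''). For the reverse inclusion, however, the paper simply asserts that a cut vertex of $H$ lies on every shortest $u,v$-path, without excluding the scenario you correctly single out as the main obstacle: that removing $x$ might only sever a side branch while leaving $u$ and $v$ in the same component. Your argument -- pick $y$ in a component of $H-\{x\}$ not containing $u$ and $v$, take a shortest $u,v$-path through $y$, and observe it would have to traverse $x$ twice -- is exactly the missing justification, and it hinges on the fact that every vertex of $I(u,v)$ lies on some shortest $u,v$-path contained in $H$. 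A further small difference is that the paper first disposes of the case $A=\emptyset$ by invoking Theorem~\ref{p:S eq u,v} (and hence Menger's theorem), whereas your uniform two-inclusion argument makes that case split and that appeal unnecessary. In short, same overall decomposition, but your version closes a genuine gap in the published proof and is slightly more economical in its dependencies.
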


\begin{proof}
Let $A=\{x \in I(u,v)~|~x \textrm{ is a cut vertex of } G[I(u,v)]\}$. If $A=\emptyset$, then either $uv \in E(G)$ or there are at least two internally disjoint shortest $u,v$-paths in $G[I(u,v)]$ and the result follows by Theorem \ref{p:S eq u,v}. 

Let now $A\neq\emptyset$. Clearly, $u,v\in S(u,v)$ and we need to show that $S(u,v)-\{u,v\}=A$. By Lemma \ref{l:cutVertices} it follows that $S(u,v)-\{u,v\}\subseteq A$. Moreover, if $x\in A$, then every shortest $u,v$-path passes through $x$ and $x\in S(u,v)-\{u,v\}$.\hfill\QED
\end{proof}

The next corollary emerges immediately from Proposition \ref{s-interval} and the role of cut vertices in interval function $I(u,v)$.

\begin{cor}
Let G be a graph. Then $x \in S(u,v)$ if and only if $I(u,x)\cup I(x,v)= I(u,v)$.     
\end{cor}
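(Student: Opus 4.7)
The plan is to prove the two implications separately. The forward direction unpacks the definition of $S(u,v)$ directly, while for the backward direction I would invoke Proposition~\ref{s-interval} and carry out a cut-vertex analysis of $H := G[I(u,v)]$, matching the paper's own hint that the corollary should follow from Proposition~\ref{s-interval} together with the role of cut vertices.

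For the forward direction, assume $x \in S(u,v)$. Since every shortest $u,v$-path passes through $x$, we have $d(u,v) = d(u,x) + d(x,v)$. The inclusion $I(u,x) \cup I(x,v) \subseteq I(u,v)$ then follows by concatenating shortest paths through $x$: any $y \in I(u,x)$ gives a walk $u \to y \to x \to v$ of length $d(u,x)+d(x,v)=d(u,v)$, and similarly for $y \in I(x,v)$. For the reverse inclusion, any $y \in I(u,v)$ lies on some shortest $u,v$-path $P$; since $x \in P$, the vertex $y$ sits on either the $u,x$-subpath or the $x,v$-subpath of $P$, placing it in $I(u,x) \cup I(x,v)$.

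For the backward direction, assume $I(u,x) \cup I(x,v) = I(u,v)$, and assume $x \ne u,v$ (the two boundary cases being immediate). Then $x \in I(u,v)$, so $d(u,v) = d(u,x) + d(x,v)$. By Proposition~\ref{s-interval} it suffices to show that $x$ is a cut vertex of $H$. Setting $A := I(u,x) \setminus \{x\}$ and $B := I(x,v) \setminus \{x\}$, the hypothesis gives $V(H) \setminus \{x\} = A \cup B$, with $u \in A$ and $v \in B$. I would then establish two things. First, $A \cap B = \emptyset$: a vertex $y$ in both would yield $d(u,y) + 2d(y,x) + d(y,v) = d(u,x) + d(x,v) = d(u,v)$, and $d(u,y) + d(y,v) \geq d(u,v)$ forces $d(y,x) = 0$, i.e.\ $y = x$. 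Second, no edge of $H$ runs between $A$ and $B$: for $a \in A$, combining $a \in I(u,x)$, $a \in I(u,v)$, and $d(u,v) = d(u,x) + d(x,v)$ yields $d(u,a) = d(u,x) - d(a,x)$, and symmetrically $d(u,b) = d(u,x) + d(x,b)$ for $b \in B$; hence $d(u,b) - d(u,a) = d(a,x) + d(x,b) \geq 2$, contradicting $|d(u,a) - d(u,b)| \leq 1$ for an edge $ab$. Together these imply $H - x$ contains no $u,v$-path, so $x$ is a cut vertex of $H$ and $x \in S(u,v)$.

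The main obstacle is the no-edges step. It requires assembling each of the two distance-from-$u$ identities out of two interval-membership conditions together with $d(u,v)=d(u,x)+d(x,v)$; once these are in hand, the contradiction with $ab$ being an edge is automatic, because the distance from $u$ must jump by at least $2$ across such an edge. The rest of the argument is straightforward bookkeeping around Proposition~\ref{s-interval}.
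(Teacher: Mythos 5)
Your argument is correct, and it follows the route the paper intends: the paper offers no written proof, asserting only that the corollary ``emerges immediately from Proposition~\ref{s-interval} and the role of cut vertices,'' and your backward direction is exactly a careful execution of that plan (the partition $A\cup B$ with no crossing edges shows $x$ separates $u$ from $v$ in $G[I(u,v)]$, hence is a cut vertex, hence lies in $S(u,v)$), while your forward direction is the standard subpath decomposition. The only cosmetic remark is that in the no-crossing-edges step the identity $d(u,a)=d(u,x)-d(a,x)$ needs only $a\in I(u,x)$, whereas the companion identity $d(u,b)=d(u,x)+d(x,b)$ genuinely uses all three hypotheses you list, so the word ``symmetrically'' slightly understates the asymmetry; the computation itself is right.
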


We continue with exploration of the potential of Theorem \ref{p:S eq u,v}, this time in the direction of all pairs of vertices. We say that a graph $G$ is an \emph{s-trivial graph} if $S(u,v)\subseteq \{u,v\}$ for all $u,v\in V(G)$. If $G$ is connected, then $G$ is s-trivial when $S(u,v)=\{u,v\}$ for all $u,v\in V(G)$, while for a disconnected graph $G$ every component of $G$ must be s-trivial itself. This can also be motivated directly. We will show that many graphs have this property. But first, observe that complete graphs $K_n$, complete graphs without an edge $K_n-e$, $n>3$, and a four-cycle $C_4$ are s-trivial.

\begin{prop}\label{s-trivial}
The following statements are equivalent for a graph $G$.
\begin{enumerate}
    \item $G$ is s-trivial.
    \item There are at least two internally disjoint shortest paths between any two non-adjacent vertices $u$ and $v$ of the same component of $G$. 
    \item  For any two vertices $u,v \in V(G)$ that are at distance 2, there exist at least two shortest paths between $u$ and $v$.
\end{enumerate}
\end{prop}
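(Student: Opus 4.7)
The plan is to establish the cycle of implications $(1) \Rightarrow (2) \Rightarrow (3) \Rightarrow (1)$. The first two implications are essentially bookkeeping; the real content lies in $(3)\Rightarrow (1)$, which I would prove by a local replacement argument on a shortest path.

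For $(1) \Rightarrow (2)$, I would take any two non-adjacent vertices $u,v$ lying in a common component. Axiom (t1) gives $u,v \in S(u,v)$, and assumption (1) then forces $S(u,v) = \{u,v\}$. Since $uv \notin E(G)$, Theorem~\ref{p:S eq u,v} yields at least two internally disjoint shortest $u,v$-paths. For $(2)\Rightarrow(3)$, any pair $u,v$ at distance $2$ is non-adjacent and lies in the same component, so (2) produces two internally disjoint shortest $u,v$-paths, which are in particular two distinct shortest paths.

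The crux is $(3)\Rightarrow(1)$. Assume (3) and, towards a contradiction, suppose some pair $u,v$ violates $S(u,v)\subseteq\{u,v\}$. Then $u,v$ are in the same component and there exists $x\in S(u,v)\setminus\{u,v\}$. Fix a shortest $u,v$-path $P\colon u=w_0,w_1,\dots,w_k=v$ with $k=d(u,v)\geq 2$; then $x=w_i$ for some $1\leq i\leq k-1$. The vertices $w_{i-1}$ and $w_{i+1}$ cannot be adjacent (otherwise $P$ could be shortened by skipping $w_i$), yet they share the common neighbour $w_i$, so $d(w_{i-1},w_{i+1})=2$. By (3), there is a second shortest $w_{i-1},w_{i+1}$-path $w_{i-1},y,w_{i+1}$ with $y\neq w_i$. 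Replacing the segment $w_{i-1}, w_i, w_{i+1}$ of $P$ by $w_{i-1}, y, w_{i+1}$ produces a $u,v$-walk $W$ of length exactly $k$.

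The main obstacle, and the one subtle point, is confirming that $W$ is actually a path that avoids $x$. For this I would use the standard fact that any $u,v$-walk of length equal to $d(u,v)$ must be a path (otherwise a repeated vertex allows shortcutting to a strictly shorter $u,v$-walk, contradicting $d(u,v)=k$). Hence $W$ is a shortest $u,v$-path, and since $y\neq w_i$ it does not pass through $x=w_i$, contradicting $x\in S(u,v)$. This completes the cycle and proves the equivalence of the three statements.
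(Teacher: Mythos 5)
Your proof is correct and follows essentially the same route as the paper: both reduce (1)$\Leftrightarrow$(2) to Theorem~\ref{p:S eq u,v}, note (2)$\Rightarrow$(3) is immediate, and prove (3)$\Rightarrow$(1) by taking the two neighbours of an interior vertex $x$ on a shortest $u,v$-path, which are at distance $2$, and rerouting through an alternative common neighbour. Your explicit justification that the rerouted $u,v$-walk of length $d(u,v)$ is in fact a path is a small point the paper glosses over, but the argument is otherwise the same.
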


\begin{proof}
 The equivalence (a)$\Leftrightarrow$(b) follows from Theorem~\ref{p:S eq u,v} and the implication (b)$\Rightarrow$(c) is trivial.

  For the implication (c)$\Rightarrow$(a) assume that (c) holds. We have to show that $S(u,v) \subseteq \{u,v\}$ for all $u,v \in V(G)$. If $u$ and $v$ do not belong to the same component of $G$, then $S(u,v)=\emptyset$. So, we may assume that $u$ and $v$ belong to the same component. By the definition $\{u,v \} \subseteq S(u,v)$. Let $x$ be a vertex different from $u$ and $v$ with $x\in I(u,v)$. Let $P$ and $Q$ denote  a shortest $u,x$-path and a shortest $x,v$-path, respectively. Let $z_1\in V(P)$ and $z_2 \in V(Q)$ be adjacent to $x$. Since $d(z_1,z_2)=2$, by assumption there will be a vertex $w\neq x$ adjacent to both $z_1$ and $z_2$. Now, the shortest path joining $u$ with $z_1$ through $P$, the shortest $z_1, z_2$-path through $w$ together with the shortest path joining $z_2$ with $v$ will be a shortest $u,v$-path not passing through $x$. Therefore, $x\notin S(u,v)$, for any $x\neq u,v$ and  $x\in I(u,v)$. Since $S(u,v) \subseteq I(u,v)$ holds in any graph for any two vertices $u,v$, we deduce that $S(u,v)=\{u,v\}$. \hfill\QED 
\end{proof}

To get more examples of s-trivial graphs we recall the \emph{join} $G\vee H$ of graphs $G$ and $H$ which is a graph  that consists of one copy of $G$ and one copy of $H$ and all the possible edges with one end-vertex in the copy of $G$ and the other end-vertex in the copy of $H$. Notice that $K_{n,m}=N_n\vee N_m$ where $N_n$ is a graph on $n$ vertices and without edges. The following corollary is a direct consequence of (c) of Proposition \ref{s-trivial}. 

\begin{cor}\label{join}
If $G$ and $H$ are nontrivial graphs, then $G\vee H$ is s-trivial.     
\end{cor}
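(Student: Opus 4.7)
The plan is to invoke part (c) of Proposition~\ref{s-trivial}: it suffices to show that for every pair $u,v$ at distance $2$ in $G\vee H$, there are at least two distinct shortest $u,v$-paths. First I would observe that, by the definition of the join, every vertex of $G$ is adjacent to every vertex of $H$. Consequently any distance-$2$ pair in $G\vee H$ must lie entirely inside $V(G)$ or entirely inside $V(H)$, since a pair consisting of one vertex from each side is adjacent.

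Fix such a pair $u,v$, and by symmetry assume $u,v\in V(G)$. Because $H$ is nontrivial, it contains two distinct vertices $h_1$ and $h_2$, and by the join both of them are adjacent to both $u$ and $v$. Hence $u-h_1-v$ and $u-h_2-v$ are two distinct $u,v$-paths of length $2$, matching $d_{G\vee H}(u,v)=2$. Applying the symmetric argument when $u,v\in V(H)$ completes the verification of the hypothesis of Proposition~\ref{s-trivial}(c), and thus $G\vee H$ is s-trivial.

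There is no substantive obstacle here. The only point requiring attention is the observation that cross-pairs (one vertex in each part) cannot realize distance $2$, which rules out a potential case in the analysis; after this is in place, nontriviality of $H$ (resp.\ $G$) supplies the two required internal vertices without further work.
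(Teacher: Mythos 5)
Your proof is correct and follows exactly the route the paper intends: the paper states the corollary as a direct consequence of Proposition~\ref{s-trivial}(c), and you have simply spelled out the verification (distance-$2$ pairs lie in one part, and the nontriviality of the other part supplies two common neighbours, hence two shortest paths). No issues.
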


A join $H\vee H$ is a special case of lexicographic product $K_2\circ H$. More generally, the \emph{lexicographic product} of graphs $G$ and $H$ is a graph $G\circ H$ with $V(G\circ H)=V(G)\times V(H)$. Two vertices $(g,h)$ and $(g',h')$ are adjacent if $g=g'$ and $hh'\in E(H)$ or $gg'\in E(G)$. For more information on lexicographic product, we recommend \cite{HaIK}. Also, lexicographic products are s-trivial (besides some sporadic examples), which follows again from (c) of Proposition \ref{s-trivial}. 

\begin{cor}
If $G$ is a graph without isolated vertices and $H$ a nontrivial graph, then  $G\circ H$ is s-trivial.     
\end{cor}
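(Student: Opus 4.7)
The plan is to verify condition (c) of Proposition~\ref{s-trivial}: I would show that whenever two vertices of $G\circ H$ are at distance $2$, there are at least two shortest paths between them. Let $(g_1,h_1)$ and $(g_2,h_2)$ be such a pair; I would split the analysis into the cases $g_1=g_2$ and $g_1\neq g_2$, because adjacency in $G\circ H$ behaves differently depending on whether the first coordinates agree.

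In the case $g_1=g_2=g$, distance $2$ forces $h_1h_2 \notin E(H)$ and $h_1\neq h_2$. Since $G$ has no isolated vertices, $g$ has some neighbor $g'$ in $G$. By the definition of the lexicographic product, for every $h\in V(H)$ the vertex $(g',h)$ is adjacent to both $(g,h_1)$ and $(g,h_2)$. Using that $H$ is nontrivial, i.e.\ $|V(H)|\geq 2$, I immediately get at least two distinct common neighbors, hence at least two shortest $(g,h_1),(g,h_2)$-paths.

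In the case $g_1\neq g_2$, I would first argue that any common neighbor $(g',h'')$ of $(g_1,h_1)$ and $(g_2,h_2)$ in $G\circ H$ must have $g'$ adjacent to both $g_1$ and $g_2$ in $G$: the alternatives $g'=g_1$ or $g'=g_2$ would force $g_1g_2\in E(G)$, contradicting that the two vertices are non-adjacent in $G\circ H$ (which follows from distance $2$). Since the vertices are at distance $2$ in $G\circ H$, such a $g'$ exists. Then, as above, $(g',h)$ is a common neighbor for every $h\in V(H)$, and again $|V(H)|\geq 2$ supplies at least two shortest paths.

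The only delicate point is the verification in the second case that a common neighbor really must have the form $(g',h)$ with $g'$ a common $G$-neighbor of $g_1$ and $g_2$; this is just a routine unpacking of the adjacency rule of $G\circ H$, but one must be careful not to miss the alternative adjacency modes. Once both cases are handled, Proposition~\ref{s-trivial}(c)$\Rightarrow$(a) immediately yields that $G\circ H$ is s-trivial.
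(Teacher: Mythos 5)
Your proposal is correct and follows exactly the route the paper indicates: the paper derives this corollary by appealing to condition (c) of Proposition~\ref{s-trivial}, and your two-case verification (using a $G$-neighbor of $g$ when the first coordinates agree, and a common $G$-neighbor forced by the adjacency rule when they differ, each combined with $|V(H)|\geq 2$) correctly supplies the details the paper leaves implicit. No gaps.
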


Next, we characterize all the extreme vertices with respect to s-convexity. First, consider the following lemma.

\begin{lemma}\label{l:neighbors}
Let $G$ be a graph, $u,v \in V(G)$ and $a \in S(u,v)-\{u,v\}$. Then there exist $x,y \in N(a)$ such that $a \in S(x,y).$
\end{lemma}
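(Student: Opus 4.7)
The plan is to pick a shortest $u,v$-path $P$ passing through $a$ (which exists since $a \in I(u,v)$), and set $x$ to be the predecessor of $a$ and $y$ the successor of $a$ on $P$. Clearly $x,y \in N(a)$.

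First I would argue that $xy \notin E(G)$. If $xy$ were an edge, then replacing the subpath $x\text{-}a\text{-}y$ in $P$ with the edge $xy$ would yield a $u,v$-walk of length $d(u,v)-1$, contradicting the fact that $P$ is shortest. Hence $d(x,y)=2$, and the path $x\text{-}a\text{-}y$ is already a shortest $x,y$-path, so $a \in I(x,y)$.

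The main content is to show $a \in S(x,y)$, that is, that every shortest $x,y$-path passes through $a$. Suppose for contradiction some shortest $x,y$-path is of the form $x\text{-}b\text{-}y$ with $b \neq a$ (every shortest $x,y$-path has length $2$ and is of this form). Then splicing gives a $u,v$-walk
\[
W \colon u \xrightarrow{P} x - b - y \xrightarrow{P} v
\]
of length $d(u,x)+2+d(y,v) = d(u,v)$, whose vertex set does not contain $a$ (since $a$ was only used in the removed segment $x\text{-}a\text{-}y$ of $P$). Any shortest $u,v$-path contained in $W$ uses only vertices of $W$, so avoids $a$ as well. This contradicts $a \in S(u,v)$, completing the argument.

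The only potentially delicate point is making sure the extracted shortest $u,v$-path inside the walk $W$ truly avoids $a$; this is immediate because $a \notin V(W)$, so any subpath of $W$ (and hence any shortest $u,v$-path obtained by shortcutting $W$) cannot contain $a$. No further machinery beyond Theorem~\ref{p:S eq u,v} (implicitly, via the structure of length-$2$ shortest paths) and the definition of $S(u,v)$ is needed.
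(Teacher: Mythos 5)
Your proposal is correct and follows essentially the same route as the paper: take the neighbors $x,y$ of $a$ on a shortest $u,v$-path, note $d(x,y)=2$, and splice in any alternative midpoint $b$ to produce a shortest $u,v$-walk (hence path) avoiding $a$, contradicting $a\in S(u,v)$. Your extra care about extracting a path from the spliced walk is a fine (if unnecessary, since a $u,v$-walk of length $d(u,v)$ is automatically a path) refinement of the paper's one-line splice.
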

\begin{proof}
Let $u,v \in V(G)$ and $a \in S(u,v)-\{u,v\}$. Hence $a$ lies in all shortest $u,v$-paths. Let $P$ be one shortest $u,v$-path and let $x,y$ be the neighbors of $a$ on $P$. Since $P$ is a shortest $u,v$-path, $d_G(x,y)=2$. Suppose that there exists $b \neq a$ such that $xb,yb \in E(G)$. Then $u \xrightarrow{P} xby \xrightarrow{P} v$ is a shortest $u,v$-path that does not contain $a$, a contradiction. Thus $a$ lies on all shortest $x,y$-paths and hence $a \in S(x,y)$.\hfill\QED
\end{proof}

\begin{thm}\label{thm: extreme}
Let $G$ be a graph and $K$ an s-convex set of $G$. Then $v$ is an extreme vertex of $K$ if and only if $d_{G-\{v\}}(x,y) \leq 2$ for any $x,y \in N_{G[K]}(v)$.
\end{thm}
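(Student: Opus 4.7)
The plan is to prove the equivalence by handling the two directions separately, with Lemma~\ref{l:neighbors} carrying most of the work in one direction.

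For the ``only if'' direction, assume $K \setminus \{v\}$ is s-convex and take any $x, y \in N_{G[K]}(v)$. Both vertices lie in $K \setminus \{v\}$, so s-convexity gives $S(x, y) \subseteq K \setminus \{v\}$; in particular $v \notin S(x, y)$. Since $x - v - y$ is a walk of length two in $G$, we have $d_G(x, y) \le 2$. If $d_G(x, y) \le 1$ the conclusion is immediate; if $d_G(x, y) = 2$, then $v$ is a common neighbour of $x$ and $y$, but the fact that $v \notin S(x, y)$ forces a second common neighbour $z \neq v$, which yields $d_{G-v}(x, y) \le 2$ via $x - z - y$.

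For the ``if'' direction, assume $d_{G-v}(x, y) \le 2$ for every $x, y \in N_{G[K]}(v)$ and suppose towards a contradiction that $v$ is not s-extreme in $K$. Then $K \setminus \{v\}$ is not s-convex, and since $K$ itself is s-convex there exist $a, b \in K \setminus \{v\}$ with $v \in S(a, b) \setminus \{a, b\}$. Apply Lemma~\ref{l:neighbors} to obtain $x, y \in N_G(v)$ with $v \in S(x, y)$; then $d_G(x, y) = 2$ and every shortest $x, y$-path passes through $v$, so $d_{G-v}(x, y) \ge 3$. This contradicts the hypothesis provided $x, y \in K$, so that $x, y \in N_{G[K]}(v)$.

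The main obstacle is therefore ensuring that the neighbours $x, y$ furnished by Lemma~\ref{l:neighbors} actually lie in $K$, since s-convexity of $K$ guarantees only $S(a, b) \subseteq K$ and not $I(a, b) \subseteq K$. The plan for this step is to choose the pair $(a, b)$ minimising $d_G(a, v) + d_G(b, v)$ among all pairs in $K \setminus \{v\}$ whose stress interval contains $v$, and then use the cut-vertex description of stress intervals from Proposition~\ref{s-interval} together with s-convexity of $K$ to force the path-neighbours of $v$ on a shortest $a, b$-path to sit inside $K$, at which point the contradiction with the distance hypothesis closes the argument.
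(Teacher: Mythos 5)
Your forward direction is correct and is essentially the paper's argument (the paper phrases it contrapositively, you argue it directly, but the content is identical). Your backward direction also follows the paper up to the application of Lemma~\ref{l:neighbors}, and you are right that the proviso ``provided $x,y\in K$'' is the crux: the lemma only delivers neighbours of $v$ in $G$, whereas the hypothesis constrains only neighbours of $v$ inside $K$. However, your proposal does not close this gap --- the last paragraph is an unexecuted plan --- and the plan cannot be executed for an arbitrary s-convex $K$. Take $G$ on $\{a,x_1,x_2,v,y_1,y_2,b\}$ with edges $ax_1,ax_2,x_1v,x_2v,vy_1,vy_2,y_1b,y_2b$, and let $K=\{a,v,b\}$. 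By Theorem~\ref{p:S eq u,v} we get $S(a,v)=\{a,v\}$ and $S(v,b)=\{v,b\}$, and $S(a,b)=\{a,v,b\}$ since $v$ is the only cut vertex of $G[I(a,b)]=G$; hence $K$ is s-convex. Here $N_{G[K]}(v)=\emptyset$, so your distance hypothesis holds vacuously, yet $v\in S(a,b)$ with $a,b\in K-\{v\}$, so $K-\{v\}$ is not s-convex. No choice of the pair $(a,b)$, minimising $d_G(a,v)+d_G(b,v)$ or otherwise, can force the path-neighbours of $v$ into $K$, because $N_G(v)\cap K=\emptyset$.

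What this shows is that the obstacle you isolated is not a presentational wrinkle but a genuine one: the ``if'' direction, as literally stated for an arbitrary s-convex $K$, fails, and the paper's own proof of that direction contains the same unjustified step (it applies the distance hypothesis to the neighbours produced by Lemma~\ref{l:neighbors} without verifying that they lie in $K$). Both your argument and the paper's are sound when $K=V(G)$, since then $N_{G[K]}(v)=N_G(v)$ and the proviso is automatic; that is also the only case in which the theorem is invoked later in the paper. So the correct repair is not a cleverer choice of $a$ and $b$ but a restriction of the statement, e.g.\ to $K=V(G)$ or to s-convex sets $K$ with $N_G(v)\subseteq K$; as written, the completion you sketch cannot be carried out.
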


\begin{proof}
Let $v$ be an extreme vertex of an s-convex set $K$ of $G$. Then $K-\{v\}$ is an s-convex set of $G$. Suppose that there exist $x,y\in N_{G[K]}(v)$ such that $d_{G-\{v\}}(x,y) \geq 3$. Then there is exactly one shortest $x,y$-path $xvy$ in $G$. Hence $v\in S(x,y)$, which contradicts the fact that $K-\{v\}$ is s-convex.

Now let $v \in K$ be the vertex with the property that $d_{G-\{v\}}(x,y) \leq 2$ for any $x,y \in N_{G[K]}(v)$. Let $x,y \in N_{G[K]}(v)$. Since $d_{G-\{v\}}(x,y) \leq 2$,  $v$ does not lie on all shortest $x,y$-paths and thus $v \notin S(x,y)$. By Lemma~\ref{l:neighbors}, $v \notin S(a,b)$ for any $a,b \in K-\{v\}$. Hence $K-\{v\}$ is s-convex and thus $v$ is an extreme vertex of $K$. \hfill\QED
\end{proof}

A subgraph $H$ of a graph $G$ is {\em isometric} if $d_H(u,v)=d_G(u,v)$ holds for any $u,v \in V(H)$.
s-convex subgraphs of a graph $G$ behave differently as convex sets for other well known graph convexities. For geodesic convexity, it is known that any convex subgraph is also isometric which is always induced. Moreover, if $G$ is connected, then $H$ being convex subgraph of $G$ implies that $H$ is connected. But for s-convexity this is not the case. There exist s-convex subgraphs that are not connected or such that are not isometric. For example, let $x_1x_2x_3x_4x_1$ be a four-cycle $C_4$. Then $\{x_1,x_3\}$ is an s-convex set in $G$ as there is no vertex different from $x_1,x_3$ that lies on all shortest $x_1,x_3$-paths. Clearly the subgraph of $C_4$ induced by $\{x_1,x_3\}$ is not connected and thus it is also not isometric. Moreover, the same holds for any pair of non-adjacent vertices in any s-trivial graph. But surprisingly we can prove the other implication.

\begin{prop}\label{l:isometricISconvex}
Let $G$ be a graph. If $H$ is an isometric subgraph of $G$, then $H$ is an s-convex subgraph of $G$. 
\end{prop}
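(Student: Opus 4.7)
The plan is to show directly that $V(H)$ is s-convex by unwinding the definitions. I would fix arbitrary vertices $u, v \in V(H)$ and an arbitrary $x \in S_G(u,v)$, and argue that $x \in V(H)$.

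The key observation is that since $H$ is an isometric subgraph of $G$, we have $d_H(u,v) = d_G(u,v)$, so there exists a shortest $u,v$-path $P$ whose vertices all lie in $V(H)$. Because $P$ has length $d_H(u,v) = d_G(u,v)$, this path $P$ is also a shortest $u,v$-path in $G$.

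Now I would apply the hypothesis $x \in S_G(u,v)$, which by definition means that every shortest $u,v$-path in $G$ passes through $x$. In particular, $P$ passes through $x$, so $x \in V(P) \subseteq V(H)$. This shows $S_G(u,v) \subseteq V(H)$ for all $u,v \in V(H)$, which is exactly the s-convexity of $H$.

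There is no substantial obstacle here; the argument is essentially one step once the definitions are lined up. The only thing worth pointing out (in light of the discussion preceding the proposition about $C_4$) is that the converse fails, so no refinement of the argument is expected to yield a characterization. The proof I have in mind is therefore quite short and self-contained, using only the definition of isometric subgraph and the definition of the stress interval.
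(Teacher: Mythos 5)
Your proof is correct and is essentially the paper's argument in contrapositive-free form: both rest on the single observation that isometry supplies a shortest $u,v$-path lying entirely in $H$, which must contain every vertex of $S(u,v)$. The paper phrases this as a contradiction (if some $x\in S(u,v)$ were outside $V(H)$, no shortest path would lie in $H$, forcing $d_H(u,v)>d_G(u,v)$), but the substance is identical.
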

\begin{proof}
Let $H$ be an isometric subgraph of a graph $G$. Suppose that there exist $u,v\in V(H)$ such that $S(u,v)\nsubseteq V(H)$. Hence there exists $x\in S(u,v)$ such that $x\in V(G)-V(H)$. Since $x \in S(u,v)$, $x$ lies on all shortest $u,v$-paths and thus there is no shortest $u,v$-path that is entirely contained in $H$. Hence $d_H(u,v)> d_G(u,v)$ which contradicts the fact that $H$ is an isometric subgraph of $G$. Therefore $S(u,v) \subseteq V(H)$ for any $u,v \in V(H)$ and thus $H$ is an s-convex subgraph of $G$. \hfill\QED
\end{proof}

A graph $G=(V,E)$ is \emph{geodetic} if there exists a unique shortest path between each two vertices of $G$ \cite{ore-1967}.
In the next result, we show that the stress function coincides with the interval function of a graph $G$ if and only if $G$ is geodetic.

\begin{prop}\label{p:S eq I}
A connected graph $G$ is geodetic if and only if $S(u,v)=I(u,v)$, for any $u,v \in V(G)$.
\end{prop}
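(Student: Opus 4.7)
The plan is to prove each direction separately, with the reverse direction handled by contrapositive. Both directions exploit only the definitions and the already-noted containment $S(u,v)\subseteq I(u,v)$.

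For the forward direction, I would assume $G$ is geodetic and fix arbitrary $u,v\in V(G)$. Since there is a unique shortest $u,v$-path $P$, every vertex of $I(u,v)$ lies on $P$, and conversely every vertex on the (only) shortest $u,v$-path trivially lies on \emph{all} shortest $u,v$-paths. Hence $I(u,v)\subseteq S(u,v)$, and combined with the always-valid inclusion $S(u,v)\subseteq I(u,v)$ noted right after the definition of the stress interval, equality follows.

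For the converse, I would argue the contrapositive: if $G$ is not geodetic, then some pair $u,v\in V(G)$ admits two distinct shortest $u,v$-paths $P_1$ and $P_2$. Since $P_1\neq P_2$, there is a vertex $x$ lying on one of them but not on the other; without loss of generality $x\in V(P_1)\setminus V(P_2)$. Then $x\in I(u,v)$ because $x$ belongs to a shortest $u,v$-path, but $x\notin S(u,v)$ because $P_2$ is a shortest $u,v$-path avoiding $x$. Therefore $S(u,v)\subsetneq I(u,v)$, contradicting the assumption.

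There is no real obstacle here: both implications fall out immediately from the definitions of geodetic graph and of $S(u,v)$, once one observes that a vertex $x$ of a non-geodetic pair distinguishes two shortest paths and thus witnesses the strict containment $S(u,v)\subsetneq I(u,v)$. The only care needed is the trivial remark that when $u=v$ or $uv\in E(G)$ there is only one shortest $u,v$-path, so no issue arises in the forward direction for these cases.
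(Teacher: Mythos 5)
Your proof is correct. The forward direction is identical to the paper's. The converse, however, takes a genuinely different and more elementary route. The paper does not argue by a direct witness: it chooses a non-adjacent pair $u,v$ with two distinct shortest paths at \emph{minimum distance}, takes the common vertex $x\neq u$ on both paths closest to $u$, invokes Theorem~\ref{p:S eq u,v} (hence Menger) together with the hypothesis $S(u,x)=I(u,x)$ to force $ux\in E(G)$, and then derives a contradiction with the minimality of $d(u,v)$. You instead observe directly that a non-geodetic pair $u,v$ yields a vertex $x$ lying on one shortest $u,v$-path but not on another, so $x\in I(u,v)\setminus S(u,v)$. This is shorter, avoids the auxiliary theorem and the extremal choice entirely, and is the ``natural'' proof. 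The one step you should make explicit is the existence of such an $x$: two distinct \emph{paths} with the same endpoints can in general have the same vertex set, but two distinct \emph{shortest} $u,v$-paths cannot, since every vertex $w$ on a shortest $u,v$-path occupies the position determined by $d(u,w)$, so equal vertex sets would force equal vertex sequences. Since $|V(P_1)|=|V(P_2)|$ and $V(P_1)\neq V(P_2)$, the set $V(P_1)\setminus V(P_2)$ is nonempty and any of its elements (necessarily an internal vertex) serves as the required witness. With that sentence added, your argument is complete and arguably preferable to the paper's.
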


\begin{proof}
   If $G$ is geodetic, then there exists a unique shortest path between every pair of vertices $u,v \in V(G)$, and hence $s_{uv}(x) = s_{uv}(u) = 1  $, for every $x \in I(u,v)$. Hence $S(u,v) = I(u,v)$. 

   Now, for the converse part, let $S(u,v)=I(u,v)$, for any $u,v \in V(G)$. On the way to a contradiction, suppose that $G$ is not geodetic. Then there are two non-adjacent vertices $u',v' \in V(G)$ %with $d(u',v') \geq 2$ 
   such that there are at least two distinct shortest paths between $u'$ and $v'$. From all vertices with this property, choose $u$ and $v$ with the smallest possible distance and let $P$ and $Q$ be two different shortest $u,v$-paths. %If $P$ and $Q$ are disjoint then $S(u,v)=\{ u,v\}$, implying $I(u,v) \not\subset S(u,v)$. 
   Assume there is $x \in V(G)$, where $x \neq u$ and $x$ is the closest vertex to $u$ and satisfies $x \in V(P) \cap V(Q)$. Hence $xu \in E(G)$ or  $u\xrightarrow{P}x$ and $u\xrightarrow{Q}x$ are two internally disjoint shortest $u,x$-paths. Proposition~\ref{p:S eq u,v} implies that $S(u,x) = \{ u,x\}$. Since $I(u,x)=S(u,x)$, we deduce $I(u,x)=\{u,x\}$ and hence $ux \in E(G)$. Since $P$ and $Q$ are distinct and in both paths, $u$ is the first and $x$ is the second vertex, it follows that $x\xrightarrow{P}v$ and $x\xrightarrow{Q}v$ are two distinct shortest $x,v$-paths in $G$, and $d_G(x,v)<d_G(u,v)$ which contradicts the choice of $u$ and $v$.   \hfill\QED
\end{proof}

Given a transit function $R$ on a non-empty set $V$, the \textit{underlying graph} $G_R$ of $R$ is the graph with vertex set $V$, where distinct $u$ and $v$ in $V$ are joined by an edge if and only if $|R(u,v)| = 2$. Observe that if $R$ is a transit function on $G$, then $G_R$ need not be isomorphic with $G$ \cite{Mulder-2008}. Also, in the case of stress function, $G_S$ and $G$ need not be isomorphic. 

\begin{prop}\label{p:underlyingIsomorphic}
A connected graph $G$ is isomorphic to $G_S$ if and only if $G$ is a geodetic graph.
\end{prop}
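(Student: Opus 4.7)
The plan begins with a preliminary observation: if $uv\in E(G)$ then the edge $uv$ is the only shortest $u,v$-path, so $S(u,v)=\{u,v\}$ and hence $uv\in E(G_S)$. Thus $E(G)\subseteq E(G_S)$ while $V(G)=V(G_S)$, so $G\cong G_S$ is equivalent to $E(G)=E(G_S)$, which in turn is equivalent to the condition that no non-adjacent pair $u,v$ in $G$ satisfies $|S(u,v)|=2$. Since $|S(u,v)|\ge 2$ whenever $u,v$ lie in the same component, this says precisely that $|S(u,v)|\ge 3$ for every non-adjacent pair of $G$. The forward direction is then immediate from Proposition~\ref{p:S eq I}: if $G$ is geodetic then $S=I$, and $|I(u,v)|=2$ exactly when $uv\in E(G)$, so $E(G_S)=E(G)$.

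For the converse I would argue by contradiction. Assume $G\cong G_S$ and suppose $G$ is not geodetic; choose a pair $u,v$ admitting at least two distinct shortest paths with $d(u,v)$ minimum. Adjacent vertices have a unique shortest path, so $u$ and $v$ are non-adjacent and, by the preliminary observation, $|S(u,v)|\ge 3$. Fix any $w\in S(u,v)\setminus\{u,v\}$; since $w\in I(u,v)$ we have $d(u,w)+d(w,v)=d(u,v)$ with both summands at least $1$. Every shortest $u,v$-path passes through $w$, and its portions before and after $w$ are a shortest $u,w$-path and a shortest $w,v$-path respectively; conversely the concatenation of any shortest $u,w$-path with any shortest $w,v$-path is a $u,v$-walk of length $d(u,v)$, and such a walk must be a simple path, since otherwise shortcutting a repeated vertex would give a $u,v$-walk of length less than $d(u,v)$. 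Hence shortest $u,v$-paths biject with pairs (shortest $u,w$-path, shortest $w,v$-path), so multiplicity of shortest $u,v$-paths forces multiplicity of shortest paths between one of the pairs $(u,w)$ or $(w,v)$. That pair has distance strictly less than $d(u,v)$ and, since edges admit a unique shortest path, at least $2$, contradicting the minimal choice of $(u,v)$.

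I expect the main obstacle to be this reverse direction, and more specifically the factorization argument: the vertex $w\in S(u,v)\setminus\{u,v\}$ is what allows a descent to a strictly smaller counterexample, and the key nontrivial point is verifying that concatenating a shortest $u,w$-path with a shortest $w,v$-path always yields a simple (hence shortest) path, so that the counting of shortest $u,v$-paths through $w$ factors as a genuine product. Everything else in the argument is a direct consequence of the preliminary equivalence $G\cong G_S \Leftrightarrow |S(u,v)|\ge 3$ for non-adjacent pairs, combined with the minimality of $d(u,v)$.
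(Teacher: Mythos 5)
Your proof is correct, and the converse direction follows a genuinely different route from the paper's. Your preliminary reduction --- $E(G)\subseteq E(G_S)$ always holds, so (by counting edges) $G\cong G_S$ is equivalent to $E(G)=E(G_S)$, i.e.\ to $|S(u,v)|\geq 3$ for every non-adjacent pair --- is a clean way to make explicit something the paper only uses implicitly, and the forward direction is then the same appeal to Proposition~\ref{p:S eq I} in both treatments. Where you diverge is in the descent step. The paper takes a minimal non-geodetic pair $u,v$ with two shortest paths $P,Q$ and analyses their intersection: if $V(P)\cap V(Q)=\{u,v\}$ it manufactures a ``phantom edge'' $uv\in E(G_S)\setminus E(G)$ directly via Theorem~\ref{p:S eq u,v}; otherwise it picks the common internal vertex $x$ closest to $u$ and either descends to the pair $x,v$ or produces the phantom edge $ux$. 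You instead use the reduction to conclude $|S(u,v)|\geq 3$ outright, pick a stress vertex $w$ strictly between $u$ and $v$, and descend via the multiplicativity of shortest-path counts through $w$ (the number of shortest $u,v$-paths factors as the product over the two sides of $w$, so one side carries the multiplicity at strictly smaller distance). Your factorization lemma is exactly the identity $\sigma_{uv}=\sigma_{uw}\cdot\sigma_{wv}$ for $w\in S(u,v)$ that the paper itself invokes in Section~\ref{sec4}, and your justification of it (a concatenated walk of length $d(u,v)$ must be simple, and a simple path decomposes uniquely at $w$) is sound. The paper's argument is more hands-on with explicit paths and avoids any counting; yours is slightly more conceptual, buys a single uniform descent case instead of the paper's case split, and reuses machinery the paper needs elsewhere anyway. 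Both are valid proofs.
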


\begin{proof}
    Let $G$ be a geodetic graph. We have to show that $G \cong G_S$. Clearly $V(G)= V(G_S)$. Let $u,v \in V(G)$. If $uv \in E(G)$, then evidently $uv \in E(G_S)$. If $uv \in E(G_S)$, then $S(u,v)=\{u,v\}$. Since $G$ is geodetic, by Proposition \ref{p:S eq I}, we get $I(u,v) = S(u,v) =\{u,v\}$. That is, $uv \in E(G)$. Therefore $G \cong G_S$.

Conversely, suppose $G \cong G_S$. Assume that $G$ is not geodetic.     Then, among all pairs of non-adjacent vertices $u$ and $v$ that have at least two distinct shortest $u,v$-paths, say $P$ and $Q$, choose those with the smallest $d(u,v)$.    If $V(P)\cap V(Q)=\lbrace u,v \rbrace$, then $S(u,v)=\{ u,v\} = I(u,v)$ and $uv \in E(G)$, a contradiction. Therefore, let $x \in V(P) \cap V(Q)$ and $x \neq u,v$. We can choose $x$ so that $d(u,x)$
    %$ \geq 2$ 
    is the minimum. If $ux \in E(G)$, then $x\xrightarrow{P}v$ and $x\xrightarrow{Q}v$ are two distinct shortest $x,v$-paths in $G$, and $d_G(x,v)<d_G(u,v)$ which contradicts the choice of $u,v$.
    Therefore, $d(u,x)  \geq 2$. As we chose $x$ so that $d(u,x)$ is minimum, 
    $S(u,x) = \{ u,x\}$ and hence $ux \in E(G_S).$ But, since $d(u,x) \geq 2$, $ux \notin E(G)$, which is a contradiction with $G$ and $G_S$ being isomorphic. \hfill\QED
\end{proof}

Let $G$ be a connected graph with stress function $S$. Let $G_1 = G_S$ be the underlying graph of $S$. Then $G_1$ and $G$ need not be isomorphic. If they are not isomorphic, let $S_1$ be the stress function of $G_1$ and $G_2 =G_{S_1}$ be the underlying graph of $S_1$. Again, $G_2$ and $G_1$ need not be isomorphic. Thus, we can obtain a finite sequence of functions $S_1$, $S_2$, $\cdots,$ $S_{n-1}$, and a sequence of graphs $G_1$, $G_2$, $\cdots$ $G_n$, (where $G_1 =G_S, G_2 =G_{S_1},\dots,G_n =G_{S_{n-1}}$) from the graph $G$ where the sequence will end whenever $G_n$ is a geodetic graph.

\section{Stress number and stress hull number}

As in all other graph convexities it follows that all s-extreme vertices are contained in any stress hull set and also in any stress set. Moreover, any stress set is also a stress hull set. Thus $$|Ext_s(G)| \leq sh(G) \leq sn(G).$$

It is well-known that in terms of geodesic convexity the difference between the number of extreme vertices and the hull number can be arbitrary large~\cite{cz-2002} and also the difference between the hull number and the geodetic number can be arbitrary large~\cite{chz-2000,hj-2005}. The same holds also for monophonic convexity~\cite{titus-2016}. On the other hand, this is not known for toll convexity. Moreover, it was conjectured in~\cite{dravec-2022} that this does not hold. In the sequel, we present an example of a graph in which the difference between all three values can be arbitrarily large in terms of s-convexity.

Consider the following graph $G_{n,k}$. Assume we have a star $K_{1,n}$, $n\geq 2$, with the central vertex $v$, in which each edge is subdivided by one vertex. Let $\ell_1,\ldots , \ell_n$ be the leaves of this subdivided star. Now, for any $i \in \lbrace 1,\ldots,n \rbrace$  join $\ell_i$ with $v$ by $k\geq 3$ disjoint paths of length 3, see Figure~\ref{e:tn3} for $G_{4,5}$. By Theorem \ref{thm: extreme}, no vertex of $G_{n,k}$ is extreme. For the hull set, for every $i \in \lbrace 1,\ldots,n \rbrace$, consider paths of length three that we added between $v$ and $\ell_i$. Take one vertex on each $v,\ell_i$-path of length $3$, where two are neighbors of $\ell_i$ and the others are neighbors of $v$. These $k\cdot n$ vertices form a hull set $D$ of $G_{n,k}$. Note that by the construction of $G_{n,k}$, at least one vertex of each path between $v$ and $\ell_i$, $i\in\{1,\dots,k\}$, must be in $D$, therefore $sh(G_{n,k})=kn$. By the construction of $G_{n,k}$ we can derive that $S=D \cup \lbrace \ell_1,\ldots,\ell_n \rbrace$ is a smallest stress set of $G_{n,k}$ and therefore $sn(G_{n,k})=kn+n$, which shows that the difference between all the investigated parameters is arbitrarily large.

\begin{figure}[ht]
\centering
\begin{tikzpicture}[scale=1]

\draw (-3,0)--(3,0);
\draw (0,-3)--(0,3);
	
\filldraw [fill=black, draw=black,thick] (0,0) circle (3pt);
\filldraw [fill=black, draw=black,thick] (3,0) circle (3pt);
\filldraw [fill=black, draw=black,thick] (-3,0) circle (3pt);
\filldraw [fill=black, draw=black,thick] (0,3) circle (3pt);
\filldraw [fill=black, draw=black,thick] (0,-3) circle (3pt);
\filldraw [fill=black, draw=black,thick] (1.5,0) circle (3pt);
\filldraw [fill=black, draw=black,thick] (-1.5,0) circle (3pt);
\filldraw [fill=black, draw=black,thick] (0,1.5) circle (3pt);
\filldraw [fill=black, draw=black,thick] (0,-1.5) circle (3pt);

\foreach \x in {1,2}{
    \foreach \y in {0.4,0.8,1.2,1.6,2}{
		\draw (0,0)--(1,\y);
            \draw (3,0)--(2,\y);
            \draw (1,\y)--(2,\y);
		\filldraw [fill=black, draw=black,thick] (\x,\y) circle (2pt);
	}
 }
 \foreach \x in {-2,-1}{
    \foreach \y in {-0.4,-0.8,-1.2,-1.6,-2}{
		\draw (0,0)--(-1,\y);
            \draw (-3,0)--(-2,\y);
            \draw (-1,\y)--(-2,\y);
		\filldraw [fill=black, draw=black,thick] (\x,\y) circle (2pt);
	}
 }

 \foreach \y in {1,2}{
    \foreach \x in {-0.4,-0.8,-1.2,-1.6,-2}{
		\draw (0,0)--(\x,1);
            \draw (0,3)--(\x,2);
            \draw (\x,1)--(\x,2);
		\filldraw [fill=black, draw=black,thick] (\x,\y) circle (2pt);
	}
 }

 \foreach \y in {-1,-2}{
    \foreach \x in {0.4,0.8,1.2,1.6,2}{
		\draw (0,0)--(\x,-1);
            \draw (0,-3)--(\x,-2);
            \draw (\x,-1)--(\x,-2);
		\filldraw [fill=black, draw=black,thick] (\x,\y) circle (2pt);
	}
 }
\node[] at (0.7,-0.2) {$v$};
\node[] at (3.3,0) {$\ell_1$};
\node[] at (-3.3,0) {$\ell_3$};
\node[] at (0.4,3) {$\ell_2$};
\node[] at (-0.4,-3) {$\ell_4$};

\end{tikzpicture}
\caption{A graph $G_{4,5}$.\label{e:tn3}}
\end{figure}
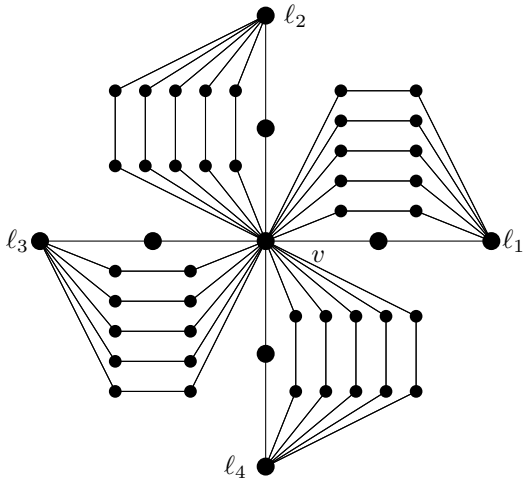

There are many graphs $G$ with  $|Ext_s(G)| = sh(G) = sn(G)$, for example, paths and complete graphs. Moreover, equality is also held in split graphs and block graphs,  which we will prove at the end of this section.

If $G$ is a nontrivial graph, then $2 \leq sh(G) \leq sn(G) \leq |V(G)|$ and all these bounds are sharp. Sharpness will be shown by computing the stress hull number and stress number of basic graph families. Since the two leaves $u$ and $v$ of $P_n$ are s-extreme and any vertex of $P_n$ lies on the only shortest $u,v$-path, it follows that $sn(P_n)=sh(P_n)=2$. It can be easily proved that paths are the only graphs with stress number and stress hull number equal to 2.

\begin{prop}\label{thm:st=2}
Let $G$ be a graph. Then $sn(G)=2$ if and only if $G$ is isomorphic to a path of order at least 2.
\end{prop}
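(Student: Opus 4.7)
The plan is straightforward, since one direction is essentially observed in the paragraph preceding the statement, while the other requires only Remark~\ref{st} together with a one-line chord-avoidance argument. First I would dispatch the reverse direction: if $G\cong P_n$ with $n\ge 2$, then the two leaves $u,v$ form a stress set because the unique shortest $u,v$-path equals $G$ itself, giving $S(u,v)=V(G)$; and $sn(G)\ge 2$ whenever $|V(G)|\ge 2$ by axiom (t3).

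For the forward direction, I would assume $sn(G)=2$ and pick distinct $u,v$ with $S(u,v)=V(G)$. Before using the interval structure, I would rule out disconnection: if $u$ and $v$ were in different components then $S(u,v)=\emptyset\ne V(G)$; and any vertex outside the component of $u$ would fail to lie in $I(u,v)\supseteq S(u,v)$. Hence $G$ is connected and in particular a shortest $u,v$-path exists.

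Next I would fix any shortest $u,v$-path $P:u=x_0,x_1,\ldots,x_m=v$. Since every vertex of $G$ belongs to $S(u,v)$ and $S(u,v)\subseteq V(P)$ (as $S(u,v)$ is contained in every shortest $u,v$-path), we obtain $V(G)=V(P)$ and $m=|V(G)|-1$. By Remark~\ref{st}, this ordering is forced on every shortest $u,v$-path, so in fact $P$ is the unique shortest $u,v$-path. The key remaining step is to show $G$ has no edges off $P$: if $x_ix_j\in E(G)$ with $j\ge i+2$, then $x_0,\ldots,x_i,x_j,\ldots,x_m$ would be a $u,v$-path strictly shorter than $P$, contradicting that $P$ is a geodesic. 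Hence $E(G)=E(P)$ and $G\cong P_{m+1}$.

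There is essentially no difficult step; the entire argument is a direct unwinding of the definitions via Remark~\ref{st}. The only subtlety worth being careful about is recognising that $S(u,v)=V(G)$ simultaneously forces the shortest $u,v$-path to be unique and to span all of $V(G)$ in a fixed order, which is precisely what makes the chord-avoidance argument collapse $G$ onto that path.
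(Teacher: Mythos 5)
Your proof is correct and follows essentially the same route as the paper: take a two-element stress set $\{u,v\}$, observe that $S(u,v)=V(G)$ forces every vertex onto a (hence the unique) shortest $u,v$-path, and conclude $G$ is that path. You are slightly more explicit than the paper in ruling out chords and in noting $sn(G)\geq 2$, but these are details the paper's argument implicitly relies on as well.
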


\begin{proof}
If $G$ is a nontrivial path, then its stress number is clearly 2. Conversely, assume that $G$ is a graph with $sn(G)=2$. Then $G$ is connected, as any stress set contains at least two vertices from every component of $G$. Let $\{x,y\}$ be a stress set of $G$. Let $P$ be s shortest $x,y$-path of $G$. Suppose that there exists $v \in V(G)\setminus V(P)$. Since $\{x,y\}$ is a stress set, $v$ lies on every shortest $x,y$-path, thus $v \in P$, a contradiction. Hence $G$ is isomorphic to a path $P$, which completes the proof. \hfill\QED
\end{proof}

\begin{prop}\label{sh=2}
Let $G$ be a graph. Then $sh(G)=2$ if and only if $G$ is isomorphic to a path of order at least 2.
\end{prop}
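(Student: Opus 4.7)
The plan is to reduce the statement to Proposition \ref{thm:st=2} by showing that the hypothesis $sh(G)=2$ in fact forces $sn(G)=2$. The guiding observation is that for any two vertices $x,y$, the s-convex hull $[\{x,y\}]$ coincides with the stress interval $S(x,y)$, so membership in a stress hull set of size two is exactly the same as membership in a stress set of size two.

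For the easy direction, if $G$ is a path of order at least $2$, then the two leaves $u,v$ satisfy $S(u,v)=V(G)$, hence $\{u,v\}$ is a stress set, hence a stress hull set, giving $sh(G)\leq 2$. On the other hand, a single vertex is trivially s-convex (its only self-interval is itself), so $[\{w\}]=\{w\}$ for any $w\in V(G)$, and therefore $sh(G)\geq 2$ whenever $|V(G)|\geq 2$.

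For the nontrivial direction, suppose $sh(G)=2$ and let $\{x,y\}$ be a stress hull set, so that $[\{x,y\}]=V(G)$. I would prove the equality $[\{x,y\}]=S(x,y)$ in two inclusions: the earlier proposition stating that $S(x,y)$ is s-convex, together with $\{x,y\}\subseteq S(x,y)$, yields $[\{x,y\}]\subseteq S(x,y)$ by minimality of the s-convex hull; conversely, any s-convex set containing $\{x,y\}$ must contain $S(x,y)$, giving the reverse inclusion. Combining this with $[\{x,y\}]=V(G)$ yields $S(x,y)=V(G)$.

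From here the argument is immediate: $S[\{x,y\}]=S(x,x)\cup S(x,y)\cup S(y,y)=S(x,y)=V(G)$ (using the transit axiom $S(w,w)=\{w\}$), so $\{x,y\}$ is a stress set, which means $sn(G)\leq 2$. Since the general inequality $sh(G)\leq sn(G)$ forces $sn(G)\geq 2$, we obtain $sn(G)=2$, and Proposition \ref{thm:st=2} then identifies $G$ as a path of order at least $2$. There is no real obstacle in this approach; the only subtle point is to verify carefully that $[\{x,y\}]=S(x,y)$, and this step is where the existing material on s-convexity of stress intervals does all the work.
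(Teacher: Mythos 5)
Your proposal is correct and follows essentially the same route as the paper: both establish the key identity $[\{x,y\}]=S(x,y)$ and then reduce to Proposition~\ref{thm:st=2}. The only cosmetic difference is that you derive this identity by citing the earlier proposition that stress intervals are s-convex together with minimality of the hull, whereas the paper re-proves the s-convexity of $S(x,y)$ inline; the substance is identical.
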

\begin{proof}
Let $x,y$ be arbitrary vertices of $G$ and let $U=S(x,y)$ and let $a,b$ be two arbitrary vertices of $U$. Let $P$ be an arbitrary shortest $x,y$-path. Since $a,b \in S(x,y)$, it follows that $a,b \in V(P)$.  Then $S(a,b) \subseteq U$ holds for any $a,b \in U$. Indeed, if $v \in S(a,b)$, then $v$ lies on every shortest $a,b$-path and hence also on $P$. Thus $v$ is on any shortest $x,y$-path and thus $v \in U$.

What we proved is that for arbitrary two vertices $x,y \in V(G)$ it holds that $S(x,y)=[\{x,y\}]$. Hence if $\{x,y\}$ is a stress hull set of $G$, we get: $S(x,y)=[\{x,y\}]=V(G)$. Thus $\{x,y\}$ is a stress set of $G$ and thus by Proposition~\ref{thm:st=2}, $G$ is isomorphic to the path.\hfill\QED
\end{proof}

In the next result, we characterize graphs $G$ with stress number equal to the order of $G$.

\begin{thm}\label{thm:st=n}
If $G$ is a graph of order $n$, then $sn(G)=n$ if and only if $G$ is an s-trivial graph.
\end{thm}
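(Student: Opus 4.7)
The plan is to prove both directions directly from the definition of an s-trivial graph and of a stress set; no deep tools are needed beyond unfolding definitions.

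For the forward direction ($sn(G)=n \Rightarrow G$ is s-trivial), I would argue by contrapositive. Suppose $G$ is not s-trivial. Then there exist vertices $u,v\in V(G)$ and some $x\in S(u,v)\setminus\{u,v\}$. Consider the set $U=V(G)\setminus\{x\}$; note that $u,v\in U$, so $x\in S(u,v)\subseteq S[U]$. For every other vertex $y\neq x$, we have $y\in U$ and $y\in S(y,y)=\{y\}$ by axiom (t3), hence $y\in S[U]$. Therefore $S[U]=V(G)$, so $U$ is a stress set of cardinality $n-1$, yielding $sn(G)\le n-1$, contradicting $sn(G)=n$.

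For the converse, suppose $G$ is s-trivial and let $U$ be any stress set of $G$. I would show $U=V(G)$. If instead there is some $x\in V(G)\setminus U$, then since $S[U]=V(G)$ there must exist $a,b\in U$ with $x\in S(a,b)$. Because $G$ is s-trivial, $S(a,b)\subseteq\{a,b\}\subseteq U$, forcing $x\in U$, a contradiction. Thus every stress set equals $V(G)$, so $sn(G)=n$.

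I do not anticipate a real obstacle here: the argument is almost a definition chase. The only subtle point is making sure that in the forward direction the vertex $y=x$ is covered (handled by $x\in S(u,v)$) and that every vertex $y\in U$ is automatically in $S[U]$ via the trivial interval $S(y,y)=\{y\}$ from axiom (t3); this guarantees that removing a single ``covered'' vertex from $V(G)$ still gives a stress set.
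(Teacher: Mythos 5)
Your proof is correct, but it takes a different and more elementary route than the paper. The paper does not argue from the definition of s-triviality directly: it invokes its Proposition on s-trivial graphs to replace ``s-trivial'' by the local condition that any two vertices at distance $2$ lie on a common $C_4$, and then proves both directions against that condition --- the forward direction by exhibiting the unique middle vertex $v$ of a path $xvy$ as an element of $S(x,y)$ (so $V(G)-\{v\}$ is a stress set), and the converse by using the lemma that any $w\in S(u,v)-\{u,v\}$ already satisfies $w\in S(x,y)$ for two \emph{neighbors} $x,y$ of $w$, which forces a distance-$2$ pair with a unique shortest path. Your argument skips both of these auxiliary results and is a pure definition chase: in the forward direction you remove any vertex lying in a nontrivial stress interval, and in the converse you observe that s-triviality makes every stress interval land inside the set itself, so no proper subset can be a stress set. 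Both arguments are sound; the paper's version has the side benefit of tying the theorem to the concrete $C_4$ characterization (which feeds the subsequent corollary identifying $sn(G)=n$ with $Ext_s(G)=V(G)$), while yours is shorter, self-contained, and works verbatim for disconnected graphs without any case analysis. One point you handled correctly and explicitly, which the paper leaves implicit, is that every vertex of a set $U$ automatically belongs to $S[U]$ via $S(y,y)=\{y\}$, which is what legitimizes ``$V(G)-\{v\}$ is a stress set'' in both proofs.
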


\begin{proof}
We will use (c) of Proposition \ref{s-trivial} which is equivalent to s-trivial graphs by the same proposition. First let $sn(G)=n$. Suppose that there exists $x,y \in V(G)$ with $d(x,y)=2$ that are not contained in a $C_4$ in $G$. Then there exists exactly one shortest $x,y$-path of length 2, say $xvy$. Hence $v \in S(x,y)$ and thus $V(G)-\{v\}$ is a stress set of $G$, a contradiction. 

For the converse assume that for any two vertices $x,y \in V(G)$ that are at distance 2 in $G$, it holds that they are contained in $C_4$. If $sn(G) < n$, then there exists $w \in V(G)$ that is contained in a stress interval $S(u,v)$ for some $u,v \in V(G)-\lbrace w \rbrace$. By Lemma~\ref{l:neighbors}, there exist $x,y \in N(w)$ such that $w \in S(x,y)$. Thus $d(x,y)=2$ and since $w \in S(x,y)$ it follows that $xwy$ is the only shortest $x,y$-path. Therefore $x,y$ are not contained in $C_4$, a contradiction. \hfill\QED
\end{proof}

The next corollary follows directly from Theorem \ref{thm:st=n} and Proposition \ref{s-trivial}.

\begin{cor}\label{sn=n}
The following statements are equivalent for a graph $G$ on $n$ vertices. 
\begin{itemize}
\item $sn(G)=n$.
\item $Ext_s(G)=V(G)$.
\item $G$ is s-trivial.
\end{itemize}
\end{cor}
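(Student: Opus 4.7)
The plan is to prove the equivalence by chaining through the three statements, leaning on Theorem~\ref{thm:st=n}, Proposition~\ref{s-trivial}, and the already-noted inequality $|Ext_s(G)| \leq sh(G) \leq sn(G)$. The equivalence $sn(G)=n \Leftrightarrow G$ is s-trivial is precisely Theorem~\ref{thm:st=n}, so no additional work is required there. What remains is to insert $Ext_s(G)=V(G)$ into the cycle.

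First I would show that $G$ s-trivial implies $Ext_s(G)=V(G)$. Fix any $v\in V(G)$; I need to verify that $V(G)-\{v\}$ is s-convex. Since $G$ is s-trivial, $S(x,y)\subseteq\{x,y\}$ for every pair $x,y$. In particular, whenever $x,y\in V(G)-\{v\}$, we have $v\notin\{x,y\}$, so $v\notin S(x,y)$, which gives $S(x,y)\subseteq V(G)-\{v\}$. Hence $v$ is s-extreme, and since $v$ was arbitrary, $Ext_s(G)=V(G)$.

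Next I would close the loop by showing $Ext_s(G)=V(G)$ implies $sn(G)=n$. This is immediate from the chain $n=|Ext_s(G)|\leq sh(G)\leq sn(G)\leq n$, forcing $sn(G)=n$. Combining the two implications with Theorem~\ref{thm:st=n} closes the cycle $(1)\Rightarrow(3)\Rightarrow(2)\Rightarrow(1)$.

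There is essentially no obstacle here, as the corollary is a repackaging of results already proved; the only point requiring a brief sanity check is the direction $G$ s-trivial $\Rightarrow$ every vertex is s-extreme, which could alternatively be verified via the characterization in Theorem~\ref{thm: extreme} together with clause (c) of Proposition~\ref{s-trivial} (for any $v$ and any $x,y\in N(v)$, we have $d(x,y)\leq 2$, and s-triviality supplies a second length-$2$ path between $x$ and $y$ avoiding $v$, so $d_{G-\{v\}}(x,y)\leq 2$).
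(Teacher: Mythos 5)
Your proposal is correct and matches the paper's intent: the paper gives no explicit proof, stating only that the corollary "follows directly from Theorem~\ref{thm:st=n} and Proposition~\ref{s-trivial}", and your cycle $(1)\Rightarrow(3)\Rightarrow(2)\Rightarrow(1)$ --- using Theorem~\ref{thm:st=n} for $(1)\Leftrightarrow(3)$, the definition of s-triviality (or Theorem~\ref{thm: extreme} with Proposition~\ref{s-trivial}(c)) for $(3)\Rightarrow(2)$, and the chain $n=|Ext_s(G)|\leq sh(G)\leq sn(G)\leq n$ for $(2)\Rightarrow(1)$ --- is exactly the natural filling-in of that remark.
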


The \emph{Cartesian product} of graphs $G$ and $H$ is a graph $G\Box H$ with $V(G\Box H)=V(G)\times V(H)$. Two vertices $(g,h)$ and $(g',h')$ are adjacent in $G\Box H$ if $gg'\in E(G)$ and $h=h'$ or $g=g'$ and $hh'\in E(H)$, see Figure \ref{figcart} for $C_5\Box C_5$. We recommend \cite{HaIK} for more information on the Cartesian as well as other graph products. 

\begin{prop}\label{cart}
For graphs $G$ and $H$ we have $sn(G\Box H)\leq\min\{sn(G)|V(H)|,sn(H)|V(G)|\}$.
\end{prop}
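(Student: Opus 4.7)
The plan is to show the first inequality $sn(G\Box H)\le sn(G)\,|V(H)|$; the second follows by symmetry (swapping the roles of $G$ and $H$), and the claim follows by taking the minimum. Let $S\subseteq V(G)$ be a minimum stress set of $G$, so $|S|=sn(G)$ and $S_G[S]=V(G)$. I will prove that $W:=S\times V(H)\subseteq V(G\Box H)$ is a stress set of $G\Box H$; since $|W|=sn(G)\,|V(H)|$ this gives the desired bound.

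The key tool is the well-known distance formula for the Cartesian product: for any $(g,h),(g',h')\in V(G\Box H)$,
$$d_{G\Box H}((g,h),(g',h'))=d_G(g,g')+d_H(h,h').$$
I would first use this to establish the following auxiliary fact: if $h\in V(H)$ is fixed and $u,v\in V(G)$, then the shortest $((u,h),(v,h))$-paths in $G\Box H$ are in bijection (via projection onto the first coordinate) with the shortest $u,v$-paths in $G$. Indeed, such a path must have length exactly $d_G(u,v)$, so it can contain no $H$-edges (any $H$-step would have to be undone by another $H$-step, adding at least $2$ to the length). Consequently, all vertices on any shortest $((u,h),(v,h))$-path lie in $V(G)\times\{h\}$, and projection to $G$ yields a shortest $u,v$-path in $G$; conversely, any shortest $u,v$-path lifts to a shortest path in the product fibre $V(G)\times\{h\}$.

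Now pick an arbitrary vertex $(g',h')\in V(G\Box H)$. Since $S$ is a stress set of $G$, there exist $u,v\in S$ with $g'\in S_G(u,v)$, meaning every shortest $u,v$-path in $G$ contains $g'$. Consider the pair $(u,h'),(v,h')\in W$. By the auxiliary fact, every shortest $((u,h'),(v,h'))$-path in $G\Box H$ has the form $\{(x_0,h'),(x_1,h'),\dots,(x_k,h')\}$ where $x_0\,x_1\cdots x_k$ is a shortest $u,v$-path in $G$, and so it contains the vertex $(g',h')$. Hence $(g',h')\in S_{G\Box H}((u,h'),(v,h'))\subseteq S_{G\Box H}[W]$, proving that $W$ is a stress set.

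The one step requiring care is the auxiliary fact about fibre-preserving shortest paths; everything else is routine. I do not expect genuine difficulty here because the additive distance formula in the Cartesian product is standard (see for instance the reference \cite{HaIK} already cited in the paper), and the argument that an $H$-step cannot appear on a shortest path between vertices sharing an $H$-coordinate is immediate from that formula.
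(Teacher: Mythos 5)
Your proposal is correct and follows essentially the same route as the paper: both take a minimum stress set $D$ of $G$ and show that $D\times V(H)$ is a stress set of $G\Box H$ by locating each vertex $(g,h)$ in a stress interval $S((g',h),(g'',h))$ within a single $G$-fibre. The only difference is that you explicitly justify, via the distance formula and the fibre-preservation argument, the claim $g\in S_G(g',g'')\Rightarrow (g,h)\in S_{G\Box H}((g',h),(g'',h))$, which the paper simply asserts.
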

\begin{proof}
We may assume that $sn(G)|V(H)|\leq sn(H)|V(G)|$ and let $D$ be a stress set of $G$. We will show that $S=D\times V(H)$ is a stress set of $G\Box H$. If $(g,h)\in V(G\Box H)-S$, then $g\notin D$. There exists $g',g''\in D$ such that $g\in S(g',g'')$ because $D$ is a stress set of $G$. Now, $(g,h)\in S((g',h),(g'',h))$ and $(g',h),(g'',h)\in S$. So, any vertex outside of $S$ belongs to a stress interval between two vertices of $S$ and thus $S$ is a stress set. Therefore the desired inequality follows.     \hfill\QED
\end{proof}

To show that this upper bound is tight we describe all s-trivial Cartesian products. For this, we need the following lemma.

\begin{lemma}\label{lemcart}
Let $G$ and $H$ be graphs and let $g,g'\in V(G)$ and $h,h'\in V(H)$. If $g\neq g'$ and $h\neq h'$, then $S((g,h),(g',h'))\subseteq\{(g,h),(g',h)\}$.
\end{lemma}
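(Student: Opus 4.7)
The plan is to exhibit two shortest $(g,h),(g',h')$-paths in $G\Box H$ whose internal vertices are disjoint; since any member of $S((g,h),(g',h'))$ must lie on every shortest path between the two endpoints, it will then be forced into $\{(g,h),(g',h')\}$.

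First I would recall the well-known structure of shortest paths in a Cartesian product: every shortest $(g,h),(g',h')$-path has length $d_G(g,g')+d_H(h,h')$ and is obtained by shuffling the edges of a shortest $g,g'$-path in $G$ (applied to the first coordinate) with the edges of a shortest $h,h'$-path in $H$ (applied to the second coordinate). In particular, if $(a,b)$ appears on such a path, then $a$ lies on a shortest $g,g'$-path in $G$ and $b$ lies on a shortest $h,h'$-path in $H$.

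With this in hand, fix a shortest $g,g'$-path $g=g_0,g_1,\dots,g_s=g'$ in $G$ and a shortest $h,h'$-path $h=h_0,h_1,\dots,h_t=h'$ in $H$; note that $s\geq 1$ and $t\geq 1$ because $g\neq g'$ and $h\neq h'$. Consider the two shortest $(g,h),(g',h')$-paths
\[
P_1\colon (g_0,h_0),(g_1,h_0),\dots,(g_s,h_0),(g_s,h_1),\dots,(g_s,h_t),
\]
\[
P_2\colon (g_0,h_0),(g_0,h_1),\dots,(g_0,h_t),(g_1,h_t),\dots,(g_s,h_t),
\]
where $P_1$ first traverses the $G$-factor and then the $H$-factor, while $P_2$ does the opposite. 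Both paths have length $s+t$, which equals the distance in $G\Box H$ between the two endpoints, so both are indeed shortest.

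The key step is to show that $V(P_1)\cap V(P_2)=\{(g,h),(g',h')\}$. The internal vertices of $P_1$ are of the form $(g_i,h_0)$ with $1\leq i\leq s-1$ or $(g_s,h_j)$ with $1\leq j\leq t-1$, and analogously the internal vertices of $P_2$ are $(g_0,h_j)$ with $1\leq j\leq t-1$ or $(g_i,h_t)$ with $1\leq i\leq s-1$. Since the $g_i$'s are pairwise distinct along a shortest path, matching first coordinates forces the indices to agree, and the same holds for the $h_j$'s; invoking $g_0\neq g_s$ and $h_0\neq h_t$ then eliminates every cross-case, so no internal vertex of $P_1$ can coincide with a vertex of $P_2$. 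Consequently any vertex of $S((g,h),(g',h'))$ must appear on both $P_1$ and $P_2$ and hence equals either $(g,h)$ or $(g',h')$, proving the claim. I do not anticipate a serious obstacle here; the whole argument reduces to the observation that in a Cartesian product one can always route first entirely in one factor and then entirely in the other.
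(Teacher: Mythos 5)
Your proof is correct and follows essentially the same route as the paper: both arguments exhibit the two shortest $(g,h),(g',h')$-paths that traverse first the $G$-layer and then the $H$-layer, and vice versa, and observe that these meet only in the endpoints. The only detail the paper adds is the degenerate case where $(g,h)$ and $(g',h')$ lie in different components of $G\Box H$, in which $S((g,h),(g',h'))=\emptyset$ and the inclusion is vacuous; your argument tacitly assumes the shortest paths in the factors exist, so a one-line remark covering that case would make it complete.
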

\begin{proof}
If $(g,h)$ and $(g',h')$ belongs to different components of $G\Box H$, then  
$S((g,h),(g',h'))=\emptyset$ and we are done. So, assume that $d((g,h),(g',h'))<\infty$. Let $P$ be a shortest $g,g'$-path in $G$ and let $Q$ be a shortest $h,h'$-path in $H$. Vertices $(V(P)\times\{h\})\cup (\{g'\}\times V(Q))$ and $(V(P)\times\{h'\})\cup (\{g\}\times V(Q))$ induce two shortest $(g,h),(g',h')$-paths $P'$ and $Q'$, respectively, in $G\Box H$. So, we have two shortest paths $P'$ and $Q'$ with only $(g,h)$ and $(g',h')$ in common. Hence $S((g,h),(g',h'))=\{(g,h),(g',h)\}$. \hfill\QED
\end{proof}

\begin{prop}
Let $G$ and $H$ be arbitrary graphs. Then $G \Box H$ is s-trivial if and only if $G$ and $H$ are s-trivial. 
\end{prop}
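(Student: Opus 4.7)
The plan is to invoke the equivalence (a)$\Leftrightarrow$(c) from Proposition~\ref{s-trivial}, so that s-triviality becomes the combinatorial condition ``every pair of vertices at distance $2$ has at least two distinct shortest paths joining them.'' Combined with the well-known distance formula for the Cartesian product, $d_{G\Box H}((g,h),(g',h'))=d_G(g,g')+d_H(h,h')$, this reduces both implications to elementary case analysis.

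For the forward direction, I would prove the contrapositive: assuming, say, $G$ is not s-trivial, I produce a vertex lying strictly inside a stress interval of $G\Box H$. By Proposition~\ref{s-trivial}(c) there exist $g,g'\in V(G)$ with $d_G(g,g')=2$ whose unique length-$2$ connection is $g-x-g'$. Fix any $h\in V(H)$ and consider the pair $(g,h),(g',h)$, which are at distance $2$ in $G\Box H$. A short unpacking of the Cartesian product adjacency rule shows that any common neighbor of $(g,h)$ and $(g',h)$ must lie in the slice $V(G)\times\{h\}$ and must have the form $(x',h)$ with $x'$ a common $G$-neighbor of $g$ and $g'$: the alternative (using an $H$-edge at one end) forces $b\sim h$ and $b=h$ simultaneously. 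Uniqueness of $x$ then makes $(x,h)$ the sole middle vertex of every length-$2$ path, so $(x,h)\in S_{G\Box H}((g,h),(g',h))\setminus\{(g,h),(g',h)\}$, contradicting s-triviality of $G\Box H$. The case of $H$ is symmetric.

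For the backward direction, assume both $G$ and $H$ are s-trivial and verify condition (c) of Proposition~\ref{s-trivial} for $G\Box H$. Take $(g,h)$ and $(g',h')$ at distance $2$; the distance formula leaves three cases for the pair $(d_G(g,g'),d_H(h,h'))$, namely $(2,0)$, $(0,2)$, and $(1,1)$. In case $(2,0)$ we have $h=h'$, and counting $G$-edges versus $H$-edges along a length-$2$ walk shows that every shortest $(g,h),(g',h)$-path is confined to the fiber $V(G)\times\{h\}$ (any $H$-edge used would have to be cancelled by another, forcing length at least $4$). Such paths are in bijection with shortest $g,g'$-paths in $G$, of which s-triviality of $G$ provides at least two. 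Case $(0,2)$ is symmetric through $H$. Case $(1,1)$ is immediate, since $(g,h)-(g,h')-(g',h')$ and $(g,h)-(g',h)-(g',h')$ are two distinct shortest paths without any appeal to the factors.

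The only mildly delicate step is the common-neighbor analysis in the forward direction, where one must carefully rule out that a length-$2$ walk between $(g,h)$ and $(g',h)$ uses an $H$-edge; once the Cartesian adjacency rule is written out, this is essentially automatic. Everything else is bookkeeping around Proposition~\ref{s-trivial}.
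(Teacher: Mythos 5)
Your proof is correct, but it takes a genuinely different route from the paper. The paper argues directly at the level of stress intervals: it invokes Lemma~\ref{lemcart} to conclude that any nontrivial stress interval in $G\Box H$ must live in a single $G$-fiber or a single $H$-fiber (where it coincides with the corresponding stress interval of the factor), and it obtains the converse by lifting $g\in S(g',g'')$ to $(g,h)\in S((g',h),(g'',h))$; both steps rest on the fact that shortest paths between two vertices of a fiber stay in that fiber. You instead localize everything to distance~$2$ via the equivalence (a)$\Leftrightarrow$(c) of Proposition~\ref{s-trivial}, so that both implications become a common-neighbor count in the product: the case analysis $(2,0)$, $(0,2)$, $(1,1)$ for the backward direction, and the uniqueness of the middle vertex $(x,h)$ for the forward direction. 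What your approach buys is self-containment and ease of verification --- the fiber-confinement claim only ever needs to be checked for paths of length~$2$, and Lemma~\ref{lemcart} is not needed at all. What the paper's approach buys is slightly more information (a description of \emph{all} nontrivial stress intervals of $G\Box H$ as fiber copies of stress intervals of the factors, not just the s-triviality equivalence) and economy, since Lemma~\ref{lemcart} is already established there for the grid computations. One small presentational point: your labels are swapped relative to the statement --- your ``forward direction'' (contrapositive: a non-s-trivial factor forces $G\Box H$ non-s-trivial) proves that s-triviality of $G\Box H$ implies s-triviality of the factors, which is the paper's ``conversely'' part --- but this is purely cosmetic.
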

\begin{proof}
If $G\Box H$ is not s-trivial, then there exists $(g,h)\in S((g',h'),(g'',h''))-\{(g',h'),(g'',h'')\}$. By Lemma \ref{lemcart} either $h'=h''$  and we have $g\in S(g',g'')$ or $g'=g''$ and $h\in S(h',h'')$ follows. If $h'=h''$ and $g\in S(g',g'')$, then clearly $h=h'$ and $g \neq g', g \neq g''$, as $(g,h)\notin \{(g'h'),(g'',h'')\}$. Thus $G$ is not s-trivial. If $g'=g''$ and $h\in S(h',h'')$, then analogous arguments implies that $H$ is not s-trivial.

Conversely, suppose that one of $G$ and $H$, say $G$, is not s-trivial. So, there exist three different vertices $g,g',g''$ such that $g\in S(g',g'')$. For arbitrary $h\in V(H)$ we have $(g,h)\in S((g',h),(g'',h))$ and thus $G\Box H$ is not s-trivial. \hfill\QED
\end{proof}

The following consequence now easily follows from the last proposition, as well as also from Theorem~\ref{thm:st=n} because any two vertices of a hypercube $Q_n$ that are at distance 2 in $Q_n$ are contained in a $C_4$.

\begin{cor}
An $n$-dimensional hypercube $G=\Box_{i=1}^nK_2$ is s-trivial.
\end{cor}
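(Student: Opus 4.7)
The statement admits two short routes, both signalled in the sentence preceding the corollary.

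The first route is induction on $n$ via the previous proposition. For the base I would observe that $Q_1 = K_2$ is s-trivial: its only pair of distinct vertices is adjacent, so $S(u,v) = \{u,v\}$. For the inductive step, write $Q_n = Q_{n-1} \Box K_2$ and invoke the inductive hypothesis together with the s-triviality of $K_2$; the previous proposition then yields that $Q_n$ is s-trivial.

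The second route goes through Theorem~\ref{thm:st=n} (equivalently, condition (c) of Proposition~\ref{s-trivial}). I would represent $V(Q_n)$ by binary strings in $\{0,1\}^n$ with edges joining strings that differ in exactly one coordinate, and take an arbitrary pair $u,v$ at distance $2$. Such vertices differ in exactly two coordinates $i$ and $j$; flipping coordinate $i$ first or coordinate $j$ first produces two distinct intermediate vertices $w_1$ and $w_2$, so $u w_1 v$ and $u w_2 v$ are two shortest $u,v$-paths (and $u, w_1, v, w_2$ induce a $C_4$). This verifies condition (c), so $Q_n$ is s-trivial; Theorem~\ref{thm:st=n} additionally gives $sn(Q_n)=2^n$.

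No real obstacle arises: the proposition on Cartesian products already did the heavy lifting, and the direct verification of the two-shortest-paths condition is a one-line combinatorial observation on binary strings. The only small point worth double-checking is the base case, namely that $K_2$ itself qualifies as s-trivial, which is immediate from the definition since its only non-trivial stress interval is $S(u,v)=\{u,v\}$ on the unique edge.
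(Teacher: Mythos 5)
Your proposal is correct and matches the paper, which justifies the corollary by exactly the two routes you describe: the preceding proposition on s-triviality of Cartesian products (applied inductively from the base case $K_2$), and Theorem~\ref{thm:st=n} together with the observation that any two vertices of $Q_n$ at distance $2$ lie on a common $C_4$. Both of your arguments are sound, so there is nothing to add.
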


Another example for the sharpness of the bound from Proposition \ref{cart} are grids $P_m\Box P_n$ where $P_m=x_1\ldots x_m$ and $P_n=y_1\ldots  y_n$. Let $S$ be a stress set of $P_m\Box P_n$. By Lemma \ref{lemcart} it follows that if $(x_i,y_j)\notin S$ for some $i\in\{1,\dots, m\}$ and $j\in\{1,\dots,n\}$, then we need to have at least two vertices from the set $\{(x_{\ell},y_j):\ell\in\{1,\dots,m\}\}$ or at least two vertices from $\{(x_i,y_k):k\in\{1,\dots,m\}\}$ in $S$. So, $sn(P_m\Box P_n)\geq 2\min\{m,n\}$. By Proposition \ref{cart} the equality follows. For the stress hull number of a grid, notice that $U=\{(x_1,y_1),(x_1,y_n),(x_m,y_1),(x_m,y_n)\}$ are extreme vertices of $P_m\Box P_n$ and thus $sh(P_m\Box P_n)\geq 4$. It is a straightforward observation that $[U]=V(P_m\Box P_n)$ and equality follows. 

\begin{cor}
For integers $m,n\geq 2$ we have $sn(P_m\Box P_n) = 2\cdot \min{\{m,n\}}$ and $sh(P_m\Box P_n)=4$.
\end{cor}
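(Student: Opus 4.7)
My plan is to handle the two equalities separately, using Proposition \ref{cart} for the upper bound on $sn$, Lemma \ref{lemcart} for the structural constraint on covering in a Cartesian product, and Theorem \ref{thm: extreme} to locate the s-extreme vertices. The upper bound $sn(P_m\Box P_n)\le 2\min\{m,n\}$ is immediate from Proposition \ref{cart} combined with $sn(P_k)=2$ from Proposition \ref{thm:st=2}. For the matching lower bound I assume $m\le n$ and take any stress set $S$. Lemma \ref{lemcart} tells me that whenever $(x_i,y_j)\notin S$, a pair in $S$ witnessing its coverage must share a coordinate with $(x_i,y_j)$, so either row $y_j$ or column $x_i$ contains at least two elements of $S$. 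I then count column by column: if every column carries at least two elements of $S$, the inequality $|S|\ge 2m$ is direct; otherwise, any lean column forces every row that meets one of its uncovered cells to carry at least two elements of $S$. Together with the fact that the four corners always lie in $S$, since they are s-extreme, this double count yields $|S|\ge 2m$.

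For the stress hull number I first show that the four corners are exactly the s-extreme vertices of $P_m\Box P_n$. By Theorem \ref{thm: extreme} this reduces to checking distances between the neighbors of a candidate vertex once it is deleted. The corner $(x_1,y_1)$ has only the single neighbor-pair $\{(x_2,y_1),(x_1,y_2)\}$, joined by the length-two path through $(x_2,y_2)$ in $G-(x_1,y_1)$, so it is extreme. For any non-corner vertex $v$, two of its neighbors lie on opposite sides along a row or column of $v$, and a direct length check in the grid shows that their distance in $G-v$ is at least four; hence $v$ is not extreme. This gives $|Ext_s(P_m\Box P_n)|=4$ and $sh(P_m\Box P_n)\ge 4$. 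For the matching upper bound let $U$ be the set of the four corners. The stress interval between two corners lying on the same outer side of the grid coincides with that entire side, because in a grid the shortest path between two vertices sharing one coordinate is unique. Thus $S[U]$ already contains the outer rectangle. A second closure step captures every interior vertex: for any $(x_i,y_j)$ with $1<i<m$ and $1<j<n$, the unique shortest path from $(x_i,y_1)$ to $(x_i,y_n)$ runs along column $x_i$ and contains $(x_i,y_j)$. Hence $[U]=V(P_m\Box P_n)$ and $sh(P_m\Box P_n)\le 4$.

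I expect the main difficulty to lie in the stress-number lower bound. Lemma \ref{lemcart} is a purely local statement, guaranteeing only that every missing cell sees a rich row or column, and turning this into the global count $|S|\ge 2\min\{m,n\}$ requires careful handling of the degenerate configuration in which a row and a column are simultaneously lean. The corner constraint is precisely what prevents this degeneracy from breaking the count in the generic case, and any proof has to combine these two ingredients cleanly. By contrast, the hull part is essentially mechanical: the two-step iteration ``corners $\to$ boundary $\to$ interior'' closes up without any subtle case analysis.
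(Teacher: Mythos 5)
Your overall architecture is the same as the paper's: the upper bound on $sn$ from Proposition~\ref{cart}, the lower bound from the local condition supplied by Lemma~\ref{lemcart}, and the hull number via the four s-extreme corners. The hull part and the upper bound $sn(P_m\Box P_n)\le 2\min\{m,n\}$ are fine. The gap sits exactly where you yourself locate the main difficulty: the step ``this double count yields $|S|\ge 2m$'' does not go through. Assume $m\le n$. If some column is lean it contains at most one vertex of $S$, so at most one row escapes being rich, giving $|S|\ge 2(n-1)$ plus whatever lies in that one lean row. When $m<n$ this already exceeds $2m$, so your count closes; but when $m=n$ and the lean row and lean column meet in a vertex of $S$, the count bottoms out at $2(n-1)+1=2m-1$. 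The corner constraint cannot repair this: the corners live in the boundary rows and columns, which are rich in such a configuration anyway, so they add nothing to the tally.

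Worse, the degenerate configuration is realizable, so the equality you are asked to prove is false as stated. In $P_3\Box P_3$ take $U=\{(x_1,y_1),(x_1,y_3),(x_3,y_1),(x_3,y_3),(x_2,y_2)\}$. Each mid-edge boundary vertex lies on the unique shortest path between the two corners of its side, hence in their stress interval, and $(x_2,y_2)\in U$ is covered trivially; thus $U$ is a stress set of size $5<6=2\min\{3,3\}$. It is in fact optimal, since the four corners are s-extreme and, by Lemma~\ref{lemcart}, the centre can only be covered by itself or by a pair straddling it in its row or column, so $sn(P_3\Box P_3)=5$. The analogous cross-shaped pattern (corners, then $(x_2,y_2),(x_2,y_{n-1}),(x_{m-1},y_2),(x_{m-1},y_{n-1})$, \dots, centre) gives stress sets of size $2m-1$ for every odd $m=n$. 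Note that the paper's own justification of the lower bound is a one-line assertion from the condition of Lemma~\ref{lemcart} and overlooks the same configuration, so this is not something you could have fixed by executing the intended argument more carefully: the claimed value $2\min\{m,n\}$ itself must be corrected for $m=n$ (your argument does establish it for $m<n$).
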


While it seems that many Cartesian products achieve the bound from Proposition \ref{cart}, this does not hold for odd cycles $C_{2k+1}\Box C_{2k+1}$, $k\geq 2$. For the idea observe a set of black vertices of $C_5\Box C_5$ on Figure \ref{figcart} that build a stress set of cardinality $13<15=sn(C_5)|V(C_5)|$. 

\begin{figure}[ht!]
\begin{center}
\begin{tikzpicture}[scale=0.5,style=thick,x=1cm,y=1cm]
\def\vr{4pt} % \vr = vertex radius;

\path (0,0) coordinate (x1);
\path (2,0) coordinate (x2);
\path (4,0) coordinate (x3);
\path (6,0) coordinate (x4);
\path (8,0) coordinate (x5);

\path (0,2) coordinate (y1);
\path (2,2) coordinate (y2);
\path (4,2) coordinate (y3);
\path (6,2) coordinate (y4);
\path (8,2) coordinate (y5);

\path (0,4) coordinate (z1);
\path (2,4) coordinate (z2);
\path (4,4) coordinate (z3);
\path (6,4) coordinate (z4);
\path (8,4) coordinate (z5);

\path (0,6) coordinate (u1);
\path (2,6) coordinate (u2);
\path (4,6) coordinate (u3);
\path (6,6) coordinate (u4);
\path (8,6) coordinate (u5);

\path (0,8) coordinate (v1);
\path (2,8) coordinate (v2);
\path (4,8) coordinate (v3);
\path (6,8) coordinate (v4);
\path (8,8) coordinate (v5);

\draw (x1)--(x2)--(x3)--(x4)--(x5);
\draw (y1)--(y2)--(y3)--(y4)--(y5);
\draw (z1)--(z2)--(z3)--(z4)--(z5);
\draw (u1)--(u2)--(u3)--(u4)--(u5);
\draw (v1)--(v2)--(v3)--(v4)--(v5);
\draw (x1)--(y1)--(z1)--(u1)--(v1);
\draw (x2)--(y2)--(z2)--(u2)--(v2);
\draw (x3)--(y3)--(z3)--(u3)--(v3);
\draw (x4)--(y4)--(z4)--(u4)--(v4);
\draw (x5)--(y5)--(z5)--(u5)--(v5);

\draw (x1) .. controls (3,1.5) and (5,1.5) .. (x5);
\draw (y1) .. controls (3,3.5) and (5,3.5) .. (y5);
\draw (z1) .. controls (3,5.5) and (5,5.5) .. (z5);
\draw (u1) .. controls (3,7.5) and (5,7.5) .. (u5);
\draw (v1) .. controls (3,9.5) and (5,9.5) .. (v5);
\draw (x1) .. controls (1.5,3) and (1.5,5) .. (v1);
\draw (x2) .. controls (3.5,3) and (3.5,5) .. (v2);
\draw (x3) .. controls (5.5,3) and (5.5,5) .. (v3);
\draw (x4) .. controls (7.5,3) and (7.5,5) .. (v4);
\draw (x5) .. controls (9.5,3) and (9.5,5) .. (v5);
  
\draw (x1)  [fill=black] circle (\vr); \draw (x2)  [fill=white] circle (\vr);
\draw (x3)  [fill=black] circle (\vr); \draw (x4)  [fill=white] circle (\vr);
\draw (x5)  [fill=black] circle (\vr); \draw (y1)  [fill=white] circle (\vr);
\draw (y2)  [fill=black] circle (\vr); \draw (y3)  [fill=white] circle (\vr);
\draw (y4)  [fill=black] circle (\vr); \draw (y5)  [fill=white] circle (\vr);
\draw (z1)  [fill=black] circle (\vr); \draw (z2)  [fill=white] circle (\vr);
\draw (z3)  [fill=black] circle (\vr); \draw (z4)  [fill=white] circle (\vr);
\draw (z5)  [fill=black] circle (\vr); \draw (u1)  [fill=white] circle (\vr);
\draw (u2)  [fill=black] circle (\vr); \draw (u3)  [fill=white] circle (\vr);
\draw (u4)  [fill=black] circle (\vr); \draw (u5)  [fill=white] circle (\vr);
\draw (v1)  [fill=black] circle (\vr); \draw (v2)  [fill=white] circle (\vr);
\draw (v3)  [fill=black] circle (\vr); \draw (v4)  [fill=white] circle (\vr);
\draw (v5)  [fill=black] circle (\vr); 

\end{tikzpicture}
\end{center}
\caption{A graph $C_5\Box C_5$ with a stress set (black vertices).}
\label{figcart}
\end{figure}
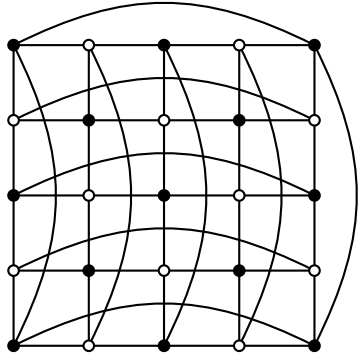

We can easily obtain stress number and stress hull number for some well-known graph families. As mentioned, a complete graph $K_n$ is s-trivial and $sn(K_n)=sh(K_n)=n$ follows from Corollary \ref{sn=n}. Similarly, the cycle $C_4$ is s-trivial and thus $sn(C_4)=sh(C_4)=4$ by the same reason. By Propositions~\ref{thm:st=2} and \ref{sh=2},
we have $3 \leq sh(C_n) \leq sn(C_n)$, $n\geq 5$. Let $C_n:v_1v_2\ldots v_nv_1$. Since $\lbrace v_1, v_{\lceil \frac{n}{2} \rceil}, v_{\lceil \frac{n}{2} \rceil+2}\rbrace$ is a stress set, we get $sh(C_n)=sn(C_n)=3$ for any $n \geq 5$. Since all leaves of a graph are extreme, $sh(K_{1,n})=sn(K_{1,n})=n$ and for $m,n\geq 2$, Corollaries~\ref{join} and \ref{sn=n} imply that $sh(K_{m,n})=sn(K_{m,n})=|V(K_{m,n})|=m+n$. For some basic graph families, their geodetic number, stress number, and stress hull number can be found in Table~\ref{tab:my_label}. %See also Figure~\ref{fig:gen}, where vertices of a stress set are colored red.

\begin{table}[ht]
    \centering
    \footnotesize
    \begin{tabular}{l|c|c|c|c}
    \hline
       Graph Type  &Order& Geodetic number & Stress number & Stress hull number \\ \hline \hline
       Path $P_n$  &$n$& 2 & 2 & 2\\ \hline
       Cycle $C_n$; $n \geq 5$  &$n$& 2 if $n$ is even, otherwise, 3 & 3 & 3  \\ \hline
    $K_n$ &$n$ & $n$ & $n$ & $n$ \\ \hline
    $K_{m,n}; m,n\geq 2$ &$m+n$ & $\min\{ m,n,4 \}$ & $m+n$ & $m+n$ \\ \hline
      $P_m \Box P_n$; $m,n \geq 2$ &$mn$ & 2  & $2\cdot \min\{m,n\}$ & 4 \\ \hline
       $Q_n$ & $2^n$& 2  & $2^n$ & $2^n$ \\ \hline
      %Split Graph (m,n,n') & $m+n$& $m+(n-n')$  & $m+(n-n')$ \\ \hline
      
    \end{tabular}
    \caption{Comparison between geodetic number, stress hull number and stress number}
    \label{tab:my_label}
\end{table}

\emph{Split graphs} are graphs whose vertices can be partitioned into a clique and an independent set. In our next result, we prove that the stress number of any split graph $G$ equals the number of extreme vertices in $G$.

\begin{thm}\label{split} 
If $G$ is a split graph, then $|Ext_s(G)|=sh(G)=sn(G)$.
\end{thm}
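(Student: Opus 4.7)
The plan is to exploit the already-established chain $|Ext_s(G)|\le sh(G)\le sn(G)$ and prove $sn(G)\le|Ext_s(G)|$ by showing that $Ext_s(G)$ is itself a stress set. I would fix a partition $V(G)=C\sqcup I$ where $C$ is a clique and $I$ is an independent set, and write $J_c=N(c)\cap I$ for each $c\in C$. As a first observation, every $y\in I$ is s-extreme: its neighbors all lie in $C$, and $C$ remains a clique in $G-\{y\}$, so $d_{G-\{y\}}(x_1,x_2)=1$ for any $x_1,x_2\in N(y)$, giving extremeness by Theorem~\ref{thm: extreme}. Hence $I\subseteq Ext_s(G)$.

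Second, I would use Theorem~\ref{thm: extreme} once more to characterize the s-extreme vertices inside $C$. Since pairs of neighbors of $c$ lying in $C-\{c\}$ are automatically adjacent in $G-\{c\}$, only pairs involving at least one vertex of $J_c$ matter. A straightforward check of distances in $G-\{c\}$ yields that $c\in C$ is s-extreme if and only if (i) every $y\in J_c$ has a neighbor in $C-\{c\}$, and (ii) any two distinct $y_1,y_2\in J_c$ share a common neighbor in $C-\{c\}$.

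The heart of the proof is to exhibit, for each non-extreme $c\in C$, vertices $u,v\in Ext_s(G)$ with $c\in S(u,v)$. If (ii) fails, I take $y_1,y_2\in J_c$ with no common neighbor in $C-\{c\}$; then the unique shortest $y_1,y_2$-path is $y_1cy_2$, so $c\in S(y_1,y_2)$ and both endpoints lie in $I\subseteq Ext_s(G)$. If (i) fails, I pick $y\in J_c$ with $N(y)=\{c\}$; since $y$ has only one edge, every $y,v$-path uses $c$, and thus $c\in S(y,v)$ for any $v\notin\{y,c\}$. To find a suitable extreme partner I choose any $y'\in I-\{y\}$ when $|I|\ge 2$; otherwise $I=\{y\}$, and then any $c'\in C-\{c\}$ satisfies $J_{c'}=\emptyset$ (because $N(y)=\{c\}$ forces $y\notin N(c')$), hence is extreme by the characterization. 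The main obstacle I anticipate is precisely the corner case $|I|=1$ in the failure of (i), where the extreme partner must be drawn from $C$ rather than $I$; once that subtlety is handled, the rest is a clean case analysis built directly on the two-part characterization of extremeness in $C$.
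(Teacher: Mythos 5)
Your proposal follows essentially the same route as the paper: both reduce the theorem to showing that $Ext_s(G)$ is a stress set, both use Theorem~\ref{thm: extreme} to identify the s-extreme vertices (all of the independent part, plus the clique vertices satisfying your conditions (i) and (ii)), and both place each non-extreme clique vertex $c$ in a stress interval between two extreme vertices by exactly the two mechanisms you describe (a pair in $J_c$ with $c$ as unique common neighbour, or a pendant neighbour of $c$ paired with another extreme vertex). The one genuine slip is in your claimed biconditional for extremeness of $c\in C$: condition (i) is only forced by pairs $(x,y)$ with $x\in C-\{c\}$ and $y\in J_c$, so it must be required only when $C-\{c\}\neq\emptyset$. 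As stated, your characterization wrongly declares $c$ non-extreme when $C=\{c\}$ and $c$ has a single pendant neighbour (for a connected split graph this is $G\cong K_2$), and your case analysis then has no extreme partner to offer --- precisely the degenerate situation the paper sidesteps by treating $|K|=1$ as a separate case. Since $K_2$ (indeed any graph where every vertex is extreme) satisfies the theorem trivially, the fix is a one-line guard on condition (i) or an explicit dispatch of the $|C|=1$ case; with that patch your argument is complete.
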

\begin{proof}
Let $G$ be a split graph whose vertices are partitioned into an independent set $A$ and a clique $K$. First, let $|K|=1$ and let $A_1 \subseteq A$ be the set of vertices adjacent to the vertex from $K$ and let $A_2=A-A_1$. If $|A_1|<2$, then Theorem~\ref{thm: extreme} implies that $Ext_s(G)=V(G)$ and thus $|Ext_s(G)|=sh(G)=sn(G)=|V(G)|$. If $|A_1| \geq 2$, then all vertices of $G$, except the vertex $x$ from $K$, are extreme vertices of $G$. Let $a,a' \in A_1$. Since $x \in S(a,a')$, $V(G)-K$ is a stress set and thus $|Ext_s(G)|=sh(G)=sn(G)=|V(G)|-1$.

Now, let $|K| \geq 2$. By Theorem~\ref{thm: extreme}, all vertices of $A$ are extreme. Furthermore, if $x \in K$ has no neighbors in $A$, then  it is an extreme vertex of $G$ by Theorem~\ref{thm: extreme}. Moreover, if $x \in K$ has exactly one neighbor $a$ in $A$, then by Theorem~\ref{thm: extreme}, $x$ is extreme if and only if $a$ has a neighbor in $K-\{x\}$. Finally, if $x \in K$ has at least two neighbors in $A$, then Theorem~\ref{thm: extreme}  implies that $x$ is extreme vertex of $G$ if and only if for any two distinct vertices $a_1,a_2 \in N(x) \cap A$, there exists $y\neq x$ that is adjacent to both $a_1$ and $a_2$. Let $U$ be the set of all extreme vertices of $G$, i.e.\ $U=Ext_s(G)$. Since all vertices of $A$ are extreme, we get $A \subseteq U$. We will prove that $U$ is a stress set of $G$. If $K \subseteq U$, then $U=V(G)$ and thus $|Ext_s(G)|=sh(G)=sn(G)=|V(G)|$. So, we may assume that $K \nsubseteq U$ and let $v \in V(G)-U$. Hence $v \in K$ and $v$ has at least one neighbor in $A$. 

{\bf Case 1.} $U \cap K \neq \emptyset$ and let $x \in U \cap K$.
Suppose first that $|N(v) \cap A|=1$ and let $a$ be the only neighbor of $v$ in $A$. Since $v \notin U$ ($v$ is not extreme), deg$_G(a)=1$, by Theorem~\ref{thm: extreme}, and $v \in S(a,x)$ follows. Now let $|N(v) \cap A| \geq 2$. Since $v \notin U$, there exists $a_1,a_2 \in N(v) \cap A$ with $v$ being their only common neighbor. Then $v \in S(a_1,a_2)$.  

{\bf Case 2.} $U \cap K = \emptyset$, i.e.\ $U=A$. Since no vertex of $K$ is extreme, every vertex of $K$ has at least one neighbor in $A$. Let $v \in K$. 
If $|N(v) \cap A| \geq 2$, then since $v$ is not extreme, there exists $a_1,a_2 \in N(v) \cap A$ such that $v$ is their only common neighbor. Thus $v \in S(a_1,a_2)$. Finally, let $a$ be the only neighbor of $v$ in $A$. If $|A|=1$, then $G$ is a complete graph (as any vertex of $K$ has a neighbor in $A$) which is not possible in this case. So, $|A|>1$ and $v\in S(a,a')$ where $a'$ is an arbitrary vertex from $A-\{a\}$. 

Therefore for any $v \notin U$, it follows that $v$ belongs to the stress interval between two vertices of $U$ and thus $U$ is a stress set of $G$ and hence $|Ext_s(G)|=sh(G)=sn(G)$.\hfill\QED
\end{proof}

From the proof of Theorem~\ref{split} we derive the following.

\begin{cor}
If $G$ is a split graph, then $sh(G)$ and $sn(G)$ can be computed in polynomial time.
\end{cor}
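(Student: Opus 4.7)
The plan is to reduce both invariants to the cardinality of $Ext_s(G)$ and then show that the latter can be decided vertex-by-vertex via a local neighbourhood check. By Theorem~\ref{split} we already have $sh(G)=sn(G)=|Ext_s(G)|$ whenever $G$ is split, so it suffices to exhibit a polynomial-time procedure that, for each $v\in V(G)$, decides whether $v$ is s-extreme.

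First I would recognise $G$ as a split graph and compute a partition $V(G)=A\cup K$ with $A$ independent and $K$ a clique; this is a classical result of Hammer and Simeone and is doable in linear time. Then, for each $v\in V(G)$, I would apply the characterisation given by Theorem~\ref{thm: extreme}: $v$ is s-extreme if and only if every pair of neighbours $x,y\in N_G(v)$ satisfies $d_{G-\{v\}}(x,y)\leq 2$. Testing this amounts, for each unordered pair $\{x,y\}\subseteq N_G(v)$, to checking whether $xy\in E(G)$ or there exists some $z\in V(G)-\{v\}$ adjacent to both $x$ and $y$. Using the adjacency matrix, each such pair is tested in $O(n)$ time, and since there are at most $O(n^2)$ pairs per vertex and $n$ vertices, the whole loop runs in $O(n^4)$ time, which is clearly polynomial.

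The main (and essentially only) obstacle is ensuring correctness, but this is precisely the content already established: Theorem~\ref{thm: extreme} certifies that the local neighbourhood test correctly identifies s-extreme vertices, while the case analysis in the proof of Theorem~\ref{split} shows that the count $|Ext_s(G)|$ coincides with $sh(G)$ and $sn(G)$. Thus the corollary follows as an algorithmic restatement of Theorem~\ref{split} with no further combinatorial input required; one could also refine the running time by exploiting the split structure (in particular, vertices of $A$ need no check since they are automatically s-extreme), but any such optimisation is immaterial for the polynomial-time claim.
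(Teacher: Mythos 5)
Your proposal is correct and matches the paper's intent exactly: the corollary is stated as following ``from the proof of Theorem~\ref{split}'', i.e.\ from the identity $sh(G)=sn(G)=|Ext_s(G)|$ together with the local test for s-extremality supplied by Theorem~\ref{thm: extreme}, which is precisely the reduction and polynomial-time neighbourhood check you describe. No gaps; the explicit $O(n^4)$ bookkeeping is a harmless elaboration of what the paper leaves implicit.
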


A \emph{block} in a graph $G$ is a maximal connected subgraph of $G$ that has no cut vertices. A graph $G$ is a \emph{block graph} if every block is a clique.  In the following result we show that the set of extreme vertices of a block graph is a stress set.

\begin{thm}
Let $G$ be a block graph of order $n$. If $G$ contains $k$ cut vertices, then $|Ext_s(G)|=sn(G) = sh(G) = n - k$.
\end{thm}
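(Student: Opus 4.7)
The plan is to identify the s-extreme vertices of a block graph with its non-cut vertices and then show that this set of non-cut vertices is already a stress set; the result will follow from the chain $|Ext_s(G)|\leq sh(G)\leq sn(G)$.

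First, I would use Theorem~\ref{thm: extreme} (with $K=V(G)$) to characterize s-extreme vertices. If $v\in V(G)$ is not a cut vertex, then in a block graph $v$ belongs to a unique block $B$, which is a clique, so $N(v)\subseteq V(B)\setminus\{v\}$ and any two neighbors $x,y$ of $v$ satisfy $d_{G-\{v\}}(x,y)=1$; hence $v$ is s-extreme. Conversely, if $v$ is a cut vertex, it lies in at least two distinct blocks $B_1,B_2$, and picking $x\in V(B_1)\setminus\{v\}\cap N(v)$ and $y\in V(B_2)\setminus\{v\}\cap N(v)$, the fact that blocks of a block graph intersect only in cut vertices forces every $x,y$-path in $G$ to contain $v$, so $d_{G-\{v\}}(x,y)=\infty>2$ and $v$ is not s-extreme. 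This yields $|Ext_s(G)|=n-k$.

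Next, I would prove that the set $U$ of non-cut vertices is a stress set of $G$. Let $v$ be any cut vertex; I need to produce $x,y\in U$ with $v\in S(x,y)$. Consider the block-cut tree $T$ of $G$ and remove $v$: the resulting forest has several components, one for each block of $G$ containing $v$, and the leaves of $T$ (hence of each component after removing $v$) are always blocks, since cut vertices have degree at least two in $T$. Each leaf block contains at least one non-cut vertex of $G$. Choosing non-cut vertices $x$ and $y$ from two different components of $G-\{v\}$, every $x,y$-path in $G$ passes through $v$, and in particular every shortest one, so $v\in S(x,y)$. Therefore $U$ is a stress set and $sn(G)\leq |U|=n-k$.

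Combining the two parts with the standard inequality $|Ext_s(G)|\leq sh(G)\leq sn(G)$ gives
$$n-k=|Ext_s(G)|\leq sh(G)\leq sn(G)\leq n-k,$$
and equality holds throughout.

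The main subtlety is the block-cut tree argument used to guarantee that every component of $G-\{v\}$, for a cut vertex $v$, contains a non-cut vertex of $G$; some care is needed for pendant components (a single non-cut neighbor of $v$) versus larger components, but in both cases the leaf-block observation applies. Everything else is a direct application of Theorem~\ref{thm: extreme} and the universal inequality among s-extreme, hull, and stress numbers.
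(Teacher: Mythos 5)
Your proof is correct and follows essentially the same route as the paper: both identify $Ext_s(G)$ with the non-cut (simplicial) vertices via Theorem~\ref{thm: extreme} and then show this set is a stress set by placing each cut vertex $v$ in $S(x,y)$ for suitable $x,y$ chosen from two different components of $G-\{v\}$. The only difference is in selecting the witnesses: the paper takes simplicial vertices of the components that are non-adjacent to $v$ (so that simpliciality transfers to $G$), whereas you pick non-cut vertices of $G$ directly via the block-cut tree, which is equally valid and in fact handles the small components (e.g.\ a single pendant neighbor of $v$) more transparently.
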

\begin{proof}
Let $G$ be a block graph with $|V(G)|=n$ and let $C(G)$ denote the set of all cut vertices in $G$, where $|C(G)|=k$. Further, let $U= V(G)-C(G)$, i.e.\ $U$ is a set of simplicial vertices of $G$ with $|U| = n - k$.
By Theorem~\ref{thm: extreme} any simplicial vertex of $G$ is s-extreme and any cut-vertex of $G$ is not s-extreme. Thus $U=Ext_s(G)$. Hence $U$ is contained in any stress hull set and in any stress set of $G$.

To conclude the proof it remains to show that $U$ is a stress set. Let $v \in V(G)-U=C(G)$. Let $G_1,G_2$ be connected components of $G-v$ and let $x_1 \in V(G_1)$ and $x_2\in V(G_2)$ be arbitrary simplicial vertices, not adjacent to $v$ in $G$, of $G_1$ and $G_2$, respectively (note that since $G_1,G_2$ are block graphs each of them contains at least two non-adjacent simplicial vertices). Hence $v \in S(x_1,x_2)$. Since for $i \in \{1,2\}$, $x_i$ is simplicial in $G_i$ and $x_i$ is not adjacent to $v$ in $G$, $x_i$ is also simplicial in $G$ and thus $x_i \in U$. Hence any $v \in C(G)$ is contained in the stress interval between two simplicial vertices and thus $U$ is a stress set of $G$.

%Since $G$ is a block graph, every cut vertex  $v \in C(G)=V(G)-U$ satisfies  $v \in S(x, y)$ for some $x, y \in U$. This implies that  $C(G) \subseteq S_G[U]$ and $C(G) \subseteq [U]$. As $U \cup C(G) = V(G)$, we conclude that $S_G[U] = [U] = V(G)$.
	
%Next, we show that no subset  $U' \subset V(G)$ with $|U'| < n - k$ satisfies  $S_G[U'] = [U'] = V(G)$. Suppose that such a subset  $U'$ exists. Since $|U'| < n - k$, $U'$ excludes at least one vertex of $U$, say  $u$. In block graphs, any non-cut vertex $u$ satisfies  $u \notin S(x, y)$ for any $x, y \in V(G)$ where $u \notin \{x, y\}$. Consequently, $u \notin [U']$ and $u \notin S_G[U'] $, which implies $[U'] \neq V(G)$ and $S_G[U'] \neq V(G)$. This contradicts the assumption that  $S_G[U'] = [U'] = V(G)$. Hence, no such subset  $U'$ exists, and it follows that  $sn(G) = sh(G) = n - k$.
	
%Finally, consider the s-extreme vertices of $G$. For any non-cut vertex $u \in U$,  $V(G)-\{u\}$ is an s-convex set. However, for any cut vertex $v \in C(G)$, $V(G)- \{v\}$ is not an s-convex set. Thus, $Ext_s(G) = U$, and  $|Ext_s(G)| = n - k$. This completes the proof. 
	\hfill\QED
\end{proof}

Since the number of cut vertices in a block graph can be determined in linear time, we have the following corollary.
	
\begin{cor}
If $G$ is a block graph, then $sh(G)$ and $sn(G)$ can be computed in linear time.
\end{cor}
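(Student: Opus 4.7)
The plan is straightforward: apply the preceding theorem, which reduces the computation to counting cut vertices. Indeed, by the theorem immediately above, if $G$ is a block graph on $n$ vertices with $k$ cut vertices, then $sh(G) = sn(G) = n - k$. So the whole task is to compute $n$ and $k$ efficiently.

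First, I would note that $n = |V(G)|$ is available in $O(n)$ time (or in $O(1)$ if the input representation makes this immediate). Second, the set of cut vertices of an arbitrary connected graph can be computed in $O(n + m)$ time using the standard Tarjan depth-first search algorithm for articulation points, which assigns to each vertex its discovery time and low-link value and identifies cut vertices from these. Since a block graph has $m = O(n)$ edges (each edge lies in a unique block, and each block is a clique that, apart from its shared cut vertex with adjacent blocks, contributes at most a linear number of edges in the block-cut tree decomposition), the DFS runs in linear time in $n$. Alternatively, one can first compute the block-cut tree in linear time and read off the cut vertices directly.

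Finally, subtracting yields $sh(G) = sn(G) = n - k$ in linear total time.

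The main (and essentially only) obstacle is the appeal to a linear-time articulation-point algorithm; once that is invoked, the corollary follows with no further work, because the exact value formula from the previous theorem bypasses any combinatorial optimization that would otherwise be required to determine $sh(G)$ or $sn(G)$.
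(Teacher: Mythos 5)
Your approach is exactly the paper's: the preceding theorem gives $sh(G)=sn(G)=n-k$ for a block graph with $k$ cut vertices, and the paper likewise just observes that the cut vertices can be counted in linear time, so the corollary follows. One correction, though: your parenthetical claim that a block graph has $m=O(n)$ edges is false --- $K_n$ is itself a block graph (a single block which is a clique) and has $\binom{n}{2}$ edges, and in general a clique block on $t$ vertices contributes $\binom{t}{2}$ edges, not linearly many. This does not break the argument, since ``linear time'' here is to be read as linear in the input size $O(n+m)$, which Tarjan's articulation-point DFS achieves regardless; just drop the erroneous edge-count claim.
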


\section{Complexity results}\label{sec4}

Given a graph $G$ with $|V(G)|=n$ and $|E(G)|=m$, the algorithm for computing $S(u,v)$ for any $u,v\in V(G)$ first involves determining the shortest paths between all pairs of vertices. This can be achieved by running a modified version of the breadth-first search (BFS) algorithm from each vertex~\cite{brandes}. The modified BFS computes the number of shortest paths from the source vertex to all other vertices in the graph. Let $\sigma_{uv}$ be the number of shortest paths between $u$ and $v$ and let $d_{uv}$ be the length of the shortest path between $u$ and $v$. The algorithm additionally stores $d_{uv}$ as well. Moreover, $S(u,v)$ for $u,v\in V(G)$ is computed as  $$S(u,v)=\{i\in V(G) \; | \; d_{uv}=d_{ui}+d_{iv} \; \text{ and }  \; \sigma_{uv}=\sigma_{ui} \cdot \sigma_{iv} \}.$$

The breadth-first traversal algorithm, as well as the modified version, takes $O(m+n)$ time. Running it from each vertex requires a total of $O(mn+n^2)$ time. Assume that a graph $G$ is connected. Then $m\geq n-1$ and therefore, w.l.o.g., the algorithm to compute all pair shortest paths, including the number of shortest paths and length of shortest paths, can be computed in $O(mn)$. Computing $S(u,v)$ for a pair of vertices based on the above-mentioned formula will require further $O(n)$ time adding to a total time of $O(mn+n)= O(mn)$ time. For all pairs of vertices, computing $S(u,v)$ will require $O(mn+n^3)= O(n^3)$ time. With this, the following theorem is proven.

\begin{thm}\label{polynom} 
One can compute all the stress intervals of a graph on $n$ vertices in $O(n^3)$ time.
\end{thm}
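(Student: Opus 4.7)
The plan is to follow exactly the three-stage scheme sketched in the paragraph preceding the statement, turning it into a clean complexity argument. First, I would establish the algorithmic primitive: for a fixed source $s$, a single modified breadth-first search simultaneously computes $d_{sv}$ and $\sigma_{sv}$ for every $v \in V(G)$ in $O(m+n)$ time, where $\sigma_{sv}$ is updated whenever one relaxes an edge that tightens or equals the current tentative distance (this is the standard Brandes-style BFS). Running this BFS from each of the $n$ vertices yields all the quantities $d_{uv}$ and $\sigma_{uv}$ in total time $O(n(m+n))$. For a connected graph $m \ge n-1$ so this is $O(mn) = O(n^3)$ in the worst case; for a disconnected graph one simply processes each component, and vertices $u,v$ in different components are handled by setting $d_{uv}=\infty$ and $S(u,v)=\emptyset$.

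Next, I would prove the characterization that drives the second stage:
\begin{equation*}
x \in S(u,v) \iff d_{uv}=d_{ux}+d_{xv} \text{ and } \sigma_{uv}=\sigma_{ux}\cdot\sigma_{xv}.
\end{equation*}
The first condition says $x$ lies on some shortest $u,v$-path, i.e.\ $x\in I(u,v)$. Given this, the number of shortest $u,v$-paths that pass through $x$ is exactly $\sigma_{ux}\cdot\sigma_{xv}$, since concatenating any shortest $u,x$-path with any shortest $x,v$-path produces a distinct shortest $u,v$-path through $x$, and conversely every shortest $u,v$-path through $x$ decomposes uniquely in this way. Therefore $x$ lies on \emph{every} shortest $u,v$-path if and only if this count equals the total $\sigma_{uv}$, which is the definition of $S(u,v)$.

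With the formula in hand, computing $S(u,v)$ for a single pair requires scanning all $n$ vertices and performing two $O(1)$ checks for each, costing $O(n)$. Doing this for all $\binom{n}{2}$ pairs contributes $O(n^3)$, which dominates the $O(mn)$ preprocessing. Adding the two stages gives the claimed $O(n^3)$ bound.

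The step that deserves the most care, and which I expect to be the conceptual (rather than computational) obstacle, is the multiplicative identity $\sigma_{uv}=\sigma_{ux}\cdot\sigma_{xv}$ being equivalent to $x$ lying on \emph{all} shortest $u,v$-paths; the nontrivial direction needs the unique-decomposition argument above, together with the observation that if any shortest $u,v$-path avoided $x$ then the number of shortest $u,v$-paths through $x$ would be strictly less than $\sigma_{uv}$. Everything else is a straightforward accounting of BFS runtimes.
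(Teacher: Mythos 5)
Your proposal is correct and follows essentially the same route as the paper: modified Brandes-style BFS from every source to get all $d_{uv}$ and $\sigma_{uv}$ in $O(mn)$ time, then the characterization $x\in S(u,v)$ iff $d_{uv}=d_{ux}+d_{xv}$ and $\sigma_{uv}=\sigma_{ux}\cdot\sigma_{xv}$, applied in $O(n)$ per pair for a total of $O(n^3)$. The only difference is that you spell out the unique-decomposition argument justifying the multiplicative identity, which the paper asserts without proof.
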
 

The decision version of the stress set is the following problem.

\begin{center}
\fbox{\parbox{0.93\linewidth}{\noindent
{\sc STRESS SET PROBLEM}\\[.8ex]
\begin{tabular*}{.96\textwidth}{rl}
\emph{Input:} & A graph $G$ and $k\in{\mathbb{N}}$.\\
\emph{Question:} & \text{Is there a stress set of} $G$ \text{of cardinality at most} $k$?
\end{tabular*}
}}
\end{center}

Our proofs will relate STRESS SET PROBLEM to the following well-known {\sc DOMINATING SET PROBLEM}. Recall that a set of vertices $D$ of a graph $G$ is a \emph{dominating set} if every vertex in $V (G)-D$ has a neighbor in $D$.\\

\begin{center}
\fbox{\parbox{0.93\linewidth}{\noindent
{\sc DOMINATING SET PROBLEM}\\[.8ex]
\begin{tabular*}{.96\textwidth}{rl}
\emph{Input:} & A graph $G=(V,E)$ and $k\in{\mathbb{N}}$.\\
\emph{Question:} & \text{Is there a dominating set of} $G$ \text{of cardinality at most} $k$?
\end{tabular*}
}}
\end{center}

For the next result we use a similar polynomial time reduction as used in \cite{Dourado:2010} to prove that finding a geodetic set in chordal graphs is NP-complete.

\begin{thm}
The STRESS SET PROBLEM is NP-complete even when restricted to bipartite graphs.
\end{thm}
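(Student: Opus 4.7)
Membership in NP is immediate from Theorem~\ref{polynom}: a candidate set $U\subseteq V(G)$ with $|U|\leq k$ is a polynomial-size certificate, and in $O(n^3)$ time one computes all stress intervals $S(u,v)$ for $u,v\in U$ and checks whether $\bigcup_{u,v\in U}S(u,v)=V(G)$.

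For NP-hardness I plan a polynomial-time reduction from the \textsc{Dominating Set Problem}. Given an instance $(G,k)$ of dominating set with $V(G)=\{v_1,\dots,v_n\}$, I will construct in polynomial time a bipartite graph $G'$ together with an integer $k'$ such that $G$ admits a dominating set of size at most $k$ if and only if $G'$ admits a stress set of size at most $k'$. The construction is obtained by decorating $G$ (in the spirit of the chordal reduction in \cite{Dourado:2010}, but adapted for bipartiteness) with three ingredients: (a) subdivide every edge of $G$ once, so that the original vertex set becomes independent and $G'$ is bipartite; (b) attach to each $v\in V(G)$ a short pendant gadget (for example a pendant path $v-x_v-y_v$) whose leaf is s-extreme by Theorem~\ref{thm: extreme} and therefore lies in every stress set, contributing the constant part $n$ of $k'$; (c) introduce for each $v\in V(G)$ a ``witness'' vertex together with auxiliary edges so that this witness can be placed into a stress interval $S(u,u')$ (with $u,u'$ in the candidate stress set) if and only if at least one vertex corresponding to an element of $N_G[v]$ is selected. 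Setting $k'=n+k$, the witnesses are forced to encode precisely the domination condition.

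The forward direction then proceeds as follows: given a dominating set $D$ of $G$ with $|D|\leq k$, I take the $n$ forced extreme leaves together with the $G'$-copies of the vertices of $D$; each witness vertex is covered because some $u\in D\cap N_G[v]$ determines a unique shortest path through the witness, while the remaining vertices (subdivision vertices, selectors, the internal vertices of the pendant paths) are covered routinely using Proposition~\ref{s-interval}. Conversely, if $U$ is a stress set of $G'$ with $|U|\leq n+k$, then $U$ must contain all $n$ forced extreme leaves, leaving at most $k$ additional vertices; by the witness design, for every $v\in V(G)$ the stress closure of $U$ includes the witness of $v$ only if some vertex of $N_G[v]$ is among these extras, and projecting them back onto $V(G)$ gives a dominating set of $G$ of size at most $k$.

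The main technical obstacle is engineering the witness gadget so that it simultaneously (i) preserves bipartiteness, (ii) produces no forced extreme vertex beyond the $n$ pendant leaves, and (iii) correctly captures the ``on \emph{every} shortest path'' semantics of $S(u,v)$. Bipartiteness forbids the triangle-based shortcuts that are convenient in the chordal reduction of \cite{Dourado:2010}, so the unique-shortest-path structure must be routed through even cycles without inadvertently creating parallel shortest paths that would destroy the $S$-interval. The verification is handled using Proposition~\ref{s-interval}, which identifies members of $S(u,v)$ as cut vertices of $G[I(u,v)]$, together with Theorem~\ref{p:S eq u,v}, which lets us rule out internally disjoint alternative shortest paths between pairs of auxiliary vertices; this is where the bulk of the routine distance case analysis in the proof will be carried out.
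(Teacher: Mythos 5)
Your NP-membership argument matches the paper's and is fine. The hardness part, however, is a plan rather than a proof, and it has a genuine gap in two respects.

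First, the load-bearing component of your reduction --- the ``witness'' gadget of part (c), which is supposed to encode the condition ``some vertex of $N_G[v]$ is selected'' into membership in a stress interval --- is never constructed. You explicitly defer it as ``the main technical obstacle,'' but this gadget \emph{is} the reduction: without it neither direction of the equivalence can be checked. Second, part (b) as you describe it actively breaks the reduction. If you attach a pendant path $v-x_v-y_v$ to each original vertex $v$, then $v$ becomes a cut vertex of $G'$ separating $\{x_v,y_v\}$ from everything else, so $v\in S(y_v,y_u)$ for every other forced leaf $y_u$. Likewise each $x_v$ and (after your edge subdivision) each subdivision vertex $w_{uv}$ lies in $S(y_u,y_v)$ for appropriate forced leaves. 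Hence the $n$ forced leaves alone already form a stress set of $G'$, every instance becomes a yes-instance for $k'=n$, and no domination information survives. Any correct gadget must prevent exactly this, which is where the real work lies.

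The paper's construction is worth comparing because it dodges both problems at once. It reduces from \textsc{Dominating Set} restricted to \emph{bipartite} graphs (NP-complete by \cite{bert84}), keeps $E(G)$ intact instead of subdividing, attaches to each $v_i$ a path $v_i-a_i-b_i$, and adds two adjacent hubs $c,d$ with $c$ joined to all $a_i$ for $v_i\in A$ and $d$ to all $a_i$ for $v_i\in B$. The hub edges are the key: they give $a_i$ a second route to the rest of the graph, so $v_i$ is \emph{not} a cut vertex and $v_i\in S(b_i,x)$ only when $x$ is a $G$-neighbor of $v_i$ (then $S(b_i,v_j)=\{b_i,a_i,v_i,v_j\}$), which is precisely the domination condition; meanwhile the hubs and all $a_i$ are covered for free by $S(b_i,b_j)$. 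The converse direction then needs the case analysis showing a minimum stress set contains no $a_i$, $c$, or $d$, and that stress intervals between non-adjacent original vertices at distance at least $4$ are trivial. If you want to keep a reduction from general \textsc{Dominating Set} via edge subdivision, you would need to re-engineer an analogue of the hub trick so that original vertices stop being cut vertices; as written, your proposal does not establish the theorem.
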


\begin{proof}
The problem is in NP, as it can be checked in polynomial time whether a given set $U$ of a bipartite graph $G$ is a stress set of $G$ by using Theorem \ref{polynom}. 

DOMINATING SET PROBLEM restricted to bipartite graphs is NP-complete \cite{bert84}. We will show a polynomial reduction from the DOMINATING SET PROBLEM for bipartite graphs to the STRESS SET PROBLEM for bipartite graphs. 

Let $(G,k)$ be an instance of DOMINATING SET PROBLEM, where $G$ is a bipartite graph and $(A,B)$ its partition. We define a graph $G'$ as follows. Let $V(G)=\lbrace v_1,\ldots,v_n \rbrace$, where $A=\lbrace v_1,\ldots,v_{\ell} \rbrace$ and  $B=\lbrace v_{\ell+1},\ldots,v_n \rbrace$. Define $V(G')=V(G) \cup \lbrace c ,d \rbrace \cup  \bigcup_{i=1}^n\lbrace a_i,b_i \rbrace$ and $$E(G')=E(G) \cup \{cd\} \cup \bigcup_{i=1}^n\lbrace v_i a_i,a_ib_i \rbrace \cup \bigcup_{i=1}^{\ell} \lbrace a_i c\rbrace \cup \bigcup_{i=\ell+1}^n \lbrace a_id\rbrace$$ (see Figure \ref{fig:bipartite} for an example). Let also $k'=k+n.$ 
Note that $G'$ is also bipartite with two partitions $$A'= A\cup \{c\}\cup \bigcup_{i=1}^{\ell} \lbrace b_i\rbrace \cup \bigcup_{i=\ell+1}^n  \lbrace a_i\rbrace  $$ and  $$B'= B\cup \{d\}\cup \bigcup_{i=1}^{\ell} \lbrace a_i\rbrace \cup \bigcup_{i=\ell+1}^n  \lbrace b_i\rbrace.$$
We will show that $G$ has a dominating set of size at most $k$ if and only if $G'$ has a stress set of size at most $k'$.

\begin{figure}[ht]
    \centering
   
\begin{tikzpicture}[scale=1]

\fill [fill opacity=0.5,fill=blue!20, draw=blue] (-1,0) ellipse (0.5cm and 2.5cm);
\fill [fill opacity=0.5,fill=red!20, draw=red] (1,0) ellipse (0.5cm and 2.5cm);

\draw[bend right=80] (-5,0) to node {}(5,0);
\draw (-1,2)--(1,0);
\draw (-1,2)--(1,-2);
\draw (-1,0)--(1,2);
\draw (-1,0)--(1,-2);
\draw (-1,-2)--(1,0);
\draw (-1,-2)--(1,-2);

\foreach \y in {0,2,-2}{
	\draw (-5,0)--(-3,\y);
	\draw (5,0)--(3,\y);
	\draw (-3,\y)--(-1,\y);
	\draw (3,\y)--(1,\y);
	\draw (-3,\y)--(-3.5,\y+1);
	\draw (3,\y)--(3.5,\y+1);
}

\filldraw [fill=black, draw=black,thick] (-5,0) circle (3pt);
\filldraw [fill=black, draw=black,thick] (5,0) circle (3pt);

 \foreach \y in {0,2,-2}{
		\filldraw [fill=blue, draw=black,thick] (-1,\y) circle (3pt);
}
 \foreach \y in {0,2,-2}{
		\filldraw [fill=red, draw=black,thick] (1,\y) circle (3pt);
}
 
\foreach \x in {-3,3}{
    \foreach \y in {0,2,-2}{
		\filldraw [fill=white, draw=black,thick] (\x,\y) circle (3pt);
	}
 } 
 
 \foreach \x in {-3.5,3.5}{
    \foreach \y in {-1,1,3}{
		\filldraw [fill=white, draw=black,thick] (\x,\y) circle (3pt);
	}
 }

\node[] at (-5.3,0) {$c$};
\node[] at (5.3,0) {$d$};
\node[] at (-2.7,2.3) {$a_1$};
\node[] at (-2.7,0.3) {$a_2$};
\node[] at (-2.7,-1.7) {$a_{\ell}$};
\node[] at (-3.1,3) {$b_1$};
\node[] at (-3.1,1) {$b_2$};
\node[] at (-3.1,-1) {$b_{\ell}$};

\node[] at (-2.7,-1) {{\Large $\vdots$}};
\node[] at (2.7,-1) {{\Large $\vdots$}};

\node[] at (2.7,2.3) {$a_{\ell+1}$};
\node[] at (2.7,0.3) {$a_{\ell+2}$};
\node[] at (2.7,-1.7) {$a_{n}$};
\node[] at (3.1,3) {$b_{\ell+1}$};
\node[] at (3.1,1) {$b_{\ell+2}$};
\node[] at (3.1,-1) {$b_n$};

\node[] at (-1,1.7) {$v_1$};
\node[] at (-1,-0.3) {$v_2$};
\node[] at (-1,-1.7) {$v_{\ell}$};

\node[] at (1.1,1.7) {$v_{\ell+1}$};
\node[] at (1.1,-0.3) {$v_{\ell+2}$};
\node[] at (1.1,-1.7) {$v_n$};

\node[] at (-1,-1) {{\Large $\vdots$}};
\node[] at (1,-1) {{\Large $\vdots$}};

\node[] at (-1,2.7) {\textcolor{blue}{$A$}};
\node[] at (1,2.7) {\textcolor{red}{$B$}};

\end{tikzpicture}

\caption{Construction of a bipartite graph $G'$ from a bipartite graph $G$.}
\label{fig:bipartite}
\end{figure}
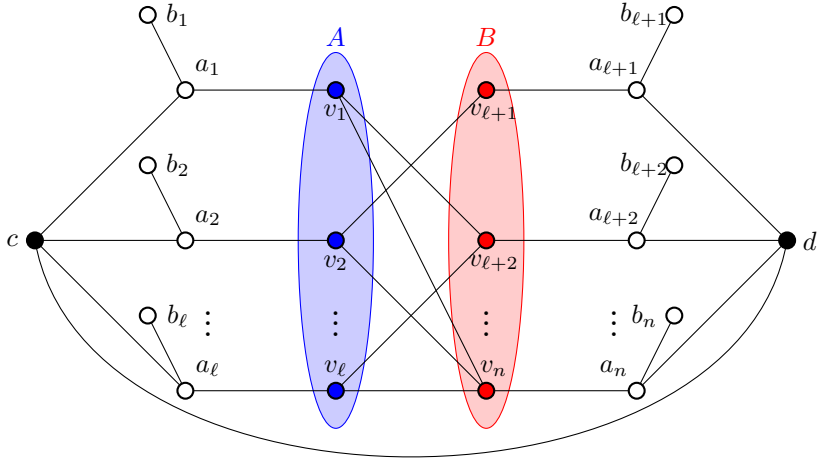

Assume first that $G$ has a dominating set of size at most $k$.  Let $D \subseteq V(G)$ be a dominating set of a graph $G$ and $|D|\leq k$. Let $X=D\cup \bigcup_{i=1}^n \lbrace b_i \rbrace$. For any $v_i,v_j \in A$ it holds that
 $S(b_i,b_j)=\{b_i,a_i,c,a_j,b_j\}$ and for any $v_i,v_j \in B$,  $S(b_i,b_j)=\{b_i,a_i,d,a_j,b_j\}$.
Therefore, $\{c,d\}\cup\{a_j;~v_j\in V(G) \} \subseteq S[\{b_j;~v_j\in V(G)\}] \subseteq S[X]$. For any $v_j \in V(G) - D$, there exists a vertex $v_i \in D$ such that $v_jv_i\in E(G)$. Hence $v_j\in S(v_i,b_j)$ and thus $\{v_j;~v_j \in V(G) -D\} \subseteq S[X]$ which implies that $X$ is a stress set of $G'$ Note that $|X|=|V(G)|+|D|\leq n+k=k'$, which completes the proof of the first implication.

To conclude the proof, let $X$ be a smallest stress set of $G'$ and $|X|\leq k'$. Let $D=X\cap V(G)$. Note that $\bigcup_{i=1}^n\lbrace b_i \rbrace \subseteq X$ as vertices $b_i$ are leaves and consequently extreme vertices of $G'$. Since $S(a_i,x) \subseteq S(b_i,x)$ holds for any $i \in \{1,\ldots , n\}$ and any $x \in V(G')$, $a_i \notin X$ by the minimality of $X$ (note that $a_i \in S(b_i,b_j)$ for any $j \neq i$). Moreover, for any $x \in V(G')$ it holds that $S(c,x) \subseteq \{c,d,a_1,\ldots ,a_n,x\}$ and all vertices from $\{c,d,a_1,\ldots ,a_n\}$ are contained in $S[\{b_1,\ldots, b_n\}]$. Thus $c,d \notin X$, again by the minimality of $X$. Hence $X=\{b_1,\ldots ,b_n\} \cup (V(G) \cap X)$ (note that $X$ contains just $b_i$'s and some vertices from $\{v_1,\ldots ,v_n\}$) and consequently $|D|=|X \cap V(G)|= |X|-n\leq k'-n=k$. Let us prove that $D$ is a dominating set of $G$. Assume conversely that $D$ is not a dominating set of $G$ and let $w\in V(G)- D$ be without a neighbor in $D$. Note that $w\notin X$, but $w\in S_{G'}\left[X\right]$. Thus there exist $x,y \in X$ such that $w\in S(x,y)$. Since $(A \cup B) \cap S(b_i,b_j)=\emptyset$ for any $i,j$, both $x$ and $y$ cannot be from $\{b_1,\ldots ,b_n\}$. Suppose now that $x=b_i$ and $y=v_j$ for some $i,j \in \{1,\ldots ,n\}$. Since $w \in S(x,y)$ it is clear that $i \in \{1,\ldots , \ell\}, j\in\{\ell+1,\ldots,n\}$ or vice versa. 
Since $w \notin X$ and $v_j \in X$ it is clear that $w \neq v_j$. Observe that $S(b_i,v_j) \cap (V(G)-\{v_j\})= \emptyset$ when $v_iv_j\notin E(G)$ and $S(b_i,v_j) \cap (V(G)-\{v_j\})=\{v_i\}$ when $v_iv_j\in E(G)$. Hence $v_iv_j \in E(G)$ and consequently $S(b_i,v_j)=\{b_i,a_i,v_i,v_j\}$ because $w \in S(b_i,v_j) \cap (V-\{v_j\})$). Thus $w=v_i$, a contradiction as $w$ has no neighbors in $X$ but $v_j \in X$ and $v_iv_j \in E(G)$. Finally, assume that $x=v_i \in X, y=v_j \in X$ and thus $w \in V(G)-\{v_i,v_j\}$. Since $w$ has no neighbors in $D=X \cap V(G)$, it is not adjacent to either of $v_i,v_j$. Hence shortest $v_i,v_j$-paths that contain $w$ are of length at least 4. But in such case (when $d(v_i,v_j) \geq 4$) no vertex of $G'$ is on every shortest $v_i,v_j$-path, thus $w \notin S(v_i,v_j)$, a final contradiction. Hence $D$ is a dominating set of $G$ of cardinality at most $k$. \hfill \QED
\end{proof}

\section{Concluding remarks}\label{sec5}

In this paper, we introduce a novel type of transit function derived from stress intervals. Following the framework of other transit functions, we define stress sets and stress hull sets, and call the cardinalities of smallest such sets as the stress number and stress hull number, respectively. We determine exact values for these parameters across various graph families and establish that the decision problem for stress sets is NP-complete, even when restricted to bipartite graphs. Additionally, we prove that the stress number for split graphs and block graphs can be computed in polynomial time. Given that the problem is NP-complete for general graphs but solvable in polynomial time for split graphs, we pose the following question regarding their superclass---the well-known chordal graphs.

\begin{quest}
What is the time complexity of the stress set problem in chordal graphs?
\end{quest}

For some special graph families, we obtained exact values also for the stress hull number, but we do not know how hard is the problem of computing the stress hull number of general graph.

\begin{quest}
Is there a polynomial-time algorithm that computes the stress hull number of a graph $G$?
\end{quest}

For some well-known types of transit function, many different graph invariants were studied. Some of them are, for example, Carath\' eodory number, Radon number and Helly number. All of these invariants would also be interesting in terms of stress transit function.

The convexity number with respect to the s-convexity could be another interesting topic. Stress convexity number of a graph $G$, $c_s(G)$, is the order of a largest proper s-convex subgraph of $G$. We can ask several questions about the s-convex sets and the stress convexity number of a given graph $G$.

\begin{prob}
Describe s-convex sets of a given graphs $G$.
\end{prob}

In many other well-known graph convexities, deciding whether the convexity number is at most $k$ (where $k \in \mathbb{N}$) is NP-complete~\cite{Dourado:2010_2, dravec-2022}. Does this also hold for s-convexity?

\begin{quest}
Is it true that for a given $k \in \mathbb{N}$, deciding whether $c_s(G) \leq k$ is NP-complete problem?
\end{quest}

Another type of problem can be derived from the underlying graphs. As briefly discussed in the last paragraph of the second section, we know that after a finite sequence of graphs $G_1, G_2,\dots, G_k$, where $(G_i)_S=G_{i+1}$ for $i\in \{1,\dots,k-1\}$, we end up with a geodetic graph $G_k$. In such a case, we say that $G_1$ converges to $G_k$. Hence, the following question and problem seem interesting.   

There exist geodetic graphs--such as trees--that cannot serve as the underlying graph of any other graph. In contrast, some block graphs can appear as the underlying graph of other graphs. For instance, a block graph formed by amalgamating two $K_4$ graphs at a single vertex also serves as the underlying graph of a graph obtained by amalgamating two $C_4$ cycles at a single vertex. This observation leads us to pose the following problem.

\begin{prob}
Characterize geodetic graphs $G$ that are the underlying graphs of a graph not isomorphic to $G$. 
\end{prob}

\begin{prob}
For a fixed geodetic graph $G$, describe all graphs that converge to $G$. 
\end{prob}

\section*{Acknowledgements}
Tanja Dravec and Iztok Peterin are partially supported by the Slovenian Research and Innovation Agency by program No. P1-0297. Rishi Ranjan Singh is partially supported by the Indian Anusandhan National Research Foundation [Grant Number: CRG/2023/007610]. Manoj Changat is partially supported by the DST, Govt. of India [Grant No. DST/INT/DAAD/P-03/2023(G)].  Arun Anil is supported by the University of Kerala, for providing the University Post Doctoral Fellowship [Ac EVII 5911/2024/UOK dated 18/07/2024].

\section*{Declaration of interests}
The authors declare that they have no conflict of interest.

\section*{Data availability}
Our manuscript has no associated data.

\end{document}